\newtheorem{theorem}{Theorem}[section]
\newtheorem{lemma}[theorem]{Lemma}
\newtheorem{remark}[theorem]{Remark}
\newtheorem{definition}[theorem]{Definition}
\newcommand{\bo}[1]{{\bf#1}}
\newcommand{\Om}{\Omega}
\newcommand{\RR}{\mathbb R}
\begin{document}
\title[Optimization of Neumann eigenvalues]{Optimization of Neumann Eigenvalues under convexity and geometric constraints}
\author{Beniamin Bogosel, Antoine Henrot, Marco Michetti}
\address[Beniamin Bogosel]{Centre de Math\'ematiques Appliqu\'ees, CNRS, \'Ecole polytechnique, Institut Polytechnique de Paris, 91120 Palaiseau, France}
\email{beniamin.bogosel@polytechnique.edu}

\address[Antoine Henrot]{Universit\'e de Lorraine, CNRS, IECL, F-54000 Nancy, France}
\email{antoine.henrot@univ-lorraine.fr}
\address[Marco Michetti]{
Universit\'e Paris-Saclay, CNRS, Laboratoire de Math\'ematiques d'Orsay, F-91450 Orsay France}
\email{marco.michetti@universite-paris-saclay.fr}

\date{}

\maketitle

\begin{abstract}
    In this paper we study optimization problems for Neumann eigenvalues $\mu_k$ among convex domains with a constraint on the diameter
    or the perimeter. We work mainly in the plane, though some results are stated in higher dimension. We study the existence 
    of an optimal domain in all considered cases. We also consider the case of the unit disk, giving values of the index $k$ for which
    it can be or cannot be extremal. We give some numerical examples for small values of $k$ that lead us to state some conjectures.   
\end{abstract}

{\small
	
	\bigskip
	\noindent\keywords{\textbf{Keywords:} Neumann eigenvalues, convexity,  shape optimization, perimeter contraint, diameter constraint}
	
	\bigskip
	\noindent\subjclass{{ MSC: Primary 35P15 Secondary: 49Q10; 52A10; 52A40}
	
	}
	\bigskip
	
\section{Introduction}
Let $\Omega\subset \mathbb{R}^d$ be a domain (a connected open set).  We consider the classical eigenvalue problem
for the Laplacian with Neumann boundary conditions:
\begin{equation}\label{Neumann eigenvalue problem}
\begin{cases}
-\Delta u = \mu u \quad \text{in } \Omega,\\
\partial_n u=0\quad \text{on }\partial \Omega,
\end{cases}
\end{equation}
where $\partial_n$ denotes the directional derivative with respect to $n$, the outward unit normal vector to $\partial\Omega$.
We recall that some mild regularity (e.g. Lipschitz) is required for the Neumann problem \eqref{Neumann eigenvalue problem} to ensure the compactness
embedding from $H^1(\Omega)$ into $L^2(\Omega)$, leading to the variational problem:
$$
\text{find } u\in H^1(\Omega):\quad \int_\Omega \nabla u\cdot \nabla \varphi = \mu \int_\Omega u\varphi\quad \text{for all }\varphi\in H^1(\Omega).
$$
We will denote by $0=\mu_0(\Omega) < \mu_1(\Omega) \leq \mu_2(\Omega) \leq \ldots$ the sequence of eigenvalues counted with their multiplicity.
In this paper, we are interested in extremum problems for the eigenvalues $\mu_k(\Omega)$ under constraints on the diameter or the perimeter of the set $\Omega$. We will denote by $P(\Omega)$ and $D(\Omega)$ the perimeter and the diameter of the domain $\Omega$.
Let us note that similar problems for Dirichlet eigenvalues have been considered in \cite{BuBuHe09} and \cite{dPV14} for the perimeter constraint
and in \cite{Dirichlet-Diameter} for the diameter constraint. For Steklov eigenvalues, the diameter constraint has been considered in \cite{Steklov-Diameter}. For more general results on optimization problems for eigenvalues, we refer to the books \cite{H06} and \cite{He-book17} and references therein.
In this paper, we will consider only convex domains since, otherwise, the problems are trivial in the sense that
$$\inf\{\mu_k(\Omega) :P(\Omega)=P_0\} = 0 \quad \mbox{and}\quad \sup\{\mu_k(\Omega) :P(\Omega)=P_0\} = +\infty$$
and
$$\inf\{\mu_k(\Omega) :D(\Omega)=D_0\} = 0 \quad \mbox{and}\quad \sup\{\mu_k(\Omega) :D(\Omega)=D_0\} = +\infty.$$
The fact that the infimum is zero in both cases is easily obtained by constructing a sequence of domains approaching a union of $k+1$
disjoint balls. To see that the supremum is infinite with a perimeter constraint, one can think to a fractal-type set  (see also the construction
proposed in \cite{HLL}). With a diameter constraint, an example of a sequence of plane domains for which $D^2(\Omega)\mu_k(\Omega) \to + \infty$ is presented in the paper \cite{HM23}.

Let us remark that, by $-2$-homogeneity of the Neumann eigenvalues, it is equivalent to minimize and maximize $\mu_k(\Om)$ with a constraint on the
perimeter or the diameter or to minimize and maximize the scale invariant quantities $P^{2/(d-1)}(\Omega) \mu_k(\Omega)$ or $D^2(\Omega) \mu_k(\Omega)$.

Let us detail now the different existence results we are able to get for these problems. We work mainly in the two-dimensional case, though 
Theorem \ref{thExSupP3} gives an existence result in three dimensions and Theorem \ref{thNonExInfPd} states a non-existence result in any 
dimension $d\geq 3$. We recall that the minimization problems for $P^2(\Omega) \mu_1(\Omega)$ or $D^2(\Omega) \mu_1(\Omega)$ have no solutions
with infimum given by
$$\inf \{P^2(\Omega)\mu_1(\Omega),\, \Omega \subset \mathbb{R}^2, \text{ bounded, open and convex } \} = 4\pi^2  $$
and
$$\inf \{D^2(\Omega)\mu_1(\Omega),\, \Omega \subset \mathbb{R}^2, \text{ bounded, open and convex } \} = \pi^2 .$$
This is a consequence of the Payne-Weinberger inequality $D^2\mu_1>\pi^2$, see \cite{PW60} and the fact that this inequality is sharp, a minimizing
sequence being a sequence of rectangles $(0,1)\times (0,1/n)$ with $n$ going to $+\infty$.
It is also well-known, see e.g. \cite{HM23}, that the following problems have no maximizers
$$\sup \{D^2(\Omega)\mu_k(\Omega),\, \Omega \subset \mathbb{R}^d, \text{ bounded, open and convex } \} := C_{k,d}. $$
Nevertheless $C_{k,d}$ is an explicit known constant, for example in dimension two: $C_{k,2}=(2j_{0,1}+(k-1)\pi)^2$
where $j_{0,1}$ is the first zero of the Bessel function $J_0$.

\medskip
Let us come to the existence results: in Section \ref{s:existence} we will prove the following theorems:
\begin{itemize}
\item (Theorem \ref{thExInfD}) Let $k\geq 2$ then there exists a solution for the following minimization problem 
\begin{equation*}
\inf \{D^2(\Omega)\mu_k(\Omega),\, \Omega \subset \mathbb{R}^2, \text{ bounded, open and convex } \}   
\end{equation*}
\item (Theorem \ref{thExInfP}) Let $k\geq 2$ then there  exists a solution for the following minimization problem: 
\begin{equation*}
\inf \{P^2(\Omega)\mu_k(\Omega),\, \Omega \subset \mathbb{R}^2, \text{ bounded, open and convex } \}    
\end{equation*}

\item (Theorem \ref{thExSupP}) There exists a solution for the following maximization problem: 
\begin{equation*}
\sup \{P^2(\Omega)\mu_k(\Omega),\, \Omega \subset \mathbb{R}^2, \text{ bounded, open and convex } \}    
\end{equation*}

\item (Theorem \ref{thExSupP3}) There exists a solution for the following maximization problem: 
\begin{equation*}
\sup \{P(\Omega)\mu_1(\Omega),\, \Omega \subset \mathbb{R}^3, \text{ bounded, open and convex } \}    
\end{equation*}

\item (Theorem \ref{thNonExInfPd}) Let $d\geq 3$ then there are no solutions for the following minimization problem: 
\begin{equation*}
\inf \{P^{\frac{2}{d-1}}(\Omega)\mu_k(\Omega),\, \Omega \subset \mathbb{R}^d, \text{ bounded, open and convex } \}    
\end{equation*}
\end{itemize}
After these existence results, in Section \ref{s:ball} we analyze the optimality properties of the disk in two-dimensions.
We prove in particular that the unit disk $B$ is not a minimizer for $D^2 \mu_k$ (among convex domains) when either $\mu_k$ is a simple eigenvalue or
when $k$ is an integer such that the eigenvalue is double with $\mu_{k}=\mu_{k+1}$, see Theorem \ref{t:ballmultipleD}.
We prove the same results of non-minimality of the unit disk for the problem of minimizing $P^2 \mu_k$.
Finally the unit disk is never a maximizer of $P^2 \mu_k$ among convex domains.

At last, Section \ref{s:numerics} is devoted to present the possible optimal shapes we are able to obtain for our three problems.
In the case of the diameter constraint, we use a discretization of the support function and we present results for $k; 2\leq k\leq 9$.
We observe, in particular, that the optimal shapes for $k \in \{4,7\}$ seem to be disks, while it seems that the optimal shape for $k=2$ has constant width
(being not a disk). Moreover, we observe that all the points $x$ on the boundary of optimal shapes saturate either the convexity constraint 
or the diameter constraint.
 In the case of the perimeter constraint, we use a discretization of the gauge function and we present minimizers for $2\leq k\leq 9$ and maximizers for $1 \leq k \leq 3$.
 In this case, our observations are the following: the maximizer of $\mu_1$ under perimeter constraint found numerically is the square. 
 This confirms a conjecture by 
Laugesen-Polterovich-Siudeja, see the recent paper \cite{HLL} where this conjecture is proved
assuming that $\Omega$ has two axis of symmetry. Note that the equilateral triangle gives exactly the same objective value but seems
harder to get with our numerical procedure. The maximizer of $\mu_2$ 
under perimeter constraint seems to be a rectangle with one side equal to twice the other one. 
Moreover, maximizers under perimeter constraint seem to be polygons. It is tempting to use the methodology described in \cite{LN10},
\cite{LNP12}, \cite{LNP16} to try to prove this fact, but the probable multiplicity of the eigenvalues at an optimal shape prevents to use
second order argument which were the basis of these works.

\section{existence of optimal shapes}\label{s:existence}
In this section we prove the existence results presented in the introduction. 
First of all, since we work with convex domains that are uniformly bounded (by the diameter or perimeter constraints), 
for any minimizing or maximizing sequence, only two situations may happen:
\begin{itemize}
    \item either the sequence converges (for the Hausdorff metric) to a convex open set, and in that case since the geometric quantities
    and the Neumann eigenvalues are continuous for the Hausdorff convergence (see e.g. \cite{HePi}) we immediately get existence;
    \item or the sequence (of the closures) converges to a convex set in a lower dimension. For example, plane convex sets may converge to
    a segment. We will say that the sequence is collapsing to a segment in that case.
\end{itemize}
This is this last possibility that we need to exclude in all our existence proofs. For that purpose, we study
the asymptotic behavior of Neumann eigenvalues on a sequence of collapsing domains as we did in \cite{HM22}, \cite{HM23}. 
In particular, we prove a generalization of the asymptotic results obtained in \cite{HM22}. We define the following class of functions:
\begin{equation*}
\mathcal{L}:=\{h\in L^{\infty}(0,1): h \,\text{ non negative, continuous, concave and } \sup h= 1\}.
\end{equation*}
Let $h\in \mathcal{L}$, we decompose $h$ as the sum of two nonnegative, concave functions $h^+,h^-$: $h=h^++h^-$ and we introduce the following set
\begin{equation*}
\Omega_h=\{(x,y)\in \mathbb{R}^2 \;|\,\, 0\leq x\leq 1, \,\, -h^-(x)\leq y\leq h^+(x) \}.
\end{equation*}
In the sequel, the choice of the decomposition of $h$ is not important.
Now we introduce the following Sturm Liouville eigenvalues:
\begin{definition}[Sturm-Liouville eigenvalues]\label{dMUk} 
Let $h\in \mathcal{L}$ we define the following Sturm Liouville eigenvalues:
\begin{equation*}
\begin{cases}
\vspace{0.3cm}
   -\frac{d}{dx}\big(h(x)\frac{du}{dx}(x)\big)=\mu(h) h(x)u(x)  \qquad  x\in \big(0,1\big) \\
      h(0)\frac{du}{dx}(0)=h(1)\frac{du}{dx}(1)=0.
\end{cases}
\end{equation*}
These eigenvalues admit the following variational characterization:
\begin{equation*}
\mu_k(h)=\inf_{E_k} \sup_{0\neq u\in E_k } \frac{\int_0^1(u')^2hdx}{\int_0^1u^2hdx},
\end{equation*}
where the infimum is taken over all $k$-dimensional subspaces of the Sobolev space $H^1([0,1])$ which are  $L^2$-orthogonal to $h$ on $[0,1]$.
\end{definition}
We start by proving the following lemma 
\begin{lemma}\label{lemAsymptoticmu}
Let $h_\epsilon$ be a sequence of functions in $\mathcal{L}$ that converges in $L^2(0,1)$ to a function $h\in \mathcal{L}$, then 
for any decomposition $h_{\epsilon}=h_{\epsilon}^++h_{\epsilon}^-$ as a sum of two nonnegative concave functions, and we set
\begin{equation*}
\Omega_{\epsilon h_{\epsilon}}=\{(x,y)\in \mathbb{R}^2 \;|\,\, 0\leq x\leq 1, \,\, -\epsilon h_{\epsilon}^-(x)\leq y\leq \epsilon h_{\epsilon}^+(x) \}.
\end{equation*}
we have
\begin{equation*}
\liminf_{\epsilon\rightarrow 0} \mu_k(\Omega_{\epsilon h_{\epsilon}})\geq \mu_k(h).   
\end{equation*}
\end{lemma}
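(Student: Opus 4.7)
The plan is to combine a vertical rescaling of the first $k+1$ Neumann eigenfunctions with the Courant--Fischer characterization of $\mu_k(h)$ to produce a $k$-dimensional test subspace of $1$D functions whose Rayleigh quotient is controlled by $\liminf \mu_k(\Omega_{\epsilon h_\epsilon})$. One may assume this $\liminf$ is finite, equal to some $M$, otherwise the inequality is trivial; pass to a subsequence realizing the limit. Let $u_0^\epsilon,\dots,u_k^\epsilon$ be an $L^2(\Omega_{\epsilon h_\epsilon})$-orthonormal family of eigenfunctions and perform the dilation $v_j^\epsilon(x,y):=\sqrt{\epsilon}\,u_j^\epsilon(x,\epsilon y)$ on the fixed-height set $\Omega_{h_\epsilon}$. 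A direct change of variables gives
\begin{equation*}
\int_{\Omega_{h_\epsilon}} v_i^\epsilon v_j^\epsilon\, dx\, dy = \delta_{ij}, \qquad
\int_{\Omega_{h_\epsilon}} (\partial_x v_j^\epsilon)^2 + \tfrac{1}{\epsilon^2}(\partial_y v_j^\epsilon)^2\, dx\, dy = \mu_j(\Omega_{\epsilon h_\epsilon}),
\end{equation*}
so $\int(\partial_y v_j^\epsilon)^2\leq (M+o(1))\epsilon^2$ and each $v_j^\epsilon$ is asymptotically independent of $y$.

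Next, introduce the vertical averages $\phi_j^\epsilon(x):=h_\epsilon(x)^{-1}\int_{-h_\epsilon^-(x)}^{h_\epsilon^+(x)} v_j^\epsilon(x,y)\,dy$. Poincar\'e--Wirtinger on each vertical slice (of length $\le 1$) yields $\|v_j^\epsilon-\phi_j^\epsilon\|_{L^2(\Omega_{h_\epsilon})}\to 0$; transferring the orthonormality of the $v_i^\epsilon,v_j^\epsilon$ through the averaging gives $\int_0^1 h_\epsilon\,\phi_i^\epsilon\phi_j^\epsilon\,dx\to\delta_{ij}$, and $\int u_j^\epsilon=0$ for $j\ge 1$ yields $\int_0^1 h_\epsilon\phi_j^\epsilon\,dx=0$. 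Using $h_\epsilon\to h$ in $L^2$ together with the bound on $\int(\partial_x v_j^\epsilon)^2\le M+o(1)$, extract (by a diagonal procedure, a.e.\ on $\{h>0\}$) subsequential limits $\phi_j\in H^1((0,1))$ satisfying $\int_0^1 h\phi_i\phi_j\,dx=\delta_{ij}$ and $\int_0^1 h\phi_j\,dx=0$ for $j\ge1$. Thus $\phi_1,\dots,\phi_k$ span a $k$-dimensional subspace of $H^1((0,1))$ that is $L^2$-orthogonal to $h$.

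To conclude, exploit the $H^1$-orthogonality of distinct Neumann eigenfunctions: for $\sum a_j^2=1$, setting $w^\epsilon=\sum a_j v_j^\epsilon$,
\begin{equation*}
\int_{\Omega_{h_\epsilon}} (\partial_x w^\epsilon)^2\, dx\,dy \;\le\; \int_{\Omega_{h_\epsilon}} (\partial_x w^\epsilon)^2 + \tfrac{1}{\epsilon^2}(\partial_y w^\epsilon)^2\, dx\,dy \;=\; \sum a_j^2\,\mu_j(\Omega_{\epsilon h_\epsilon}) \;\le\; \mu_k(\Omega_{\epsilon h_\epsilon}).
\end{equation*}
Weak lower semicontinuity of the Dirichlet integral, applied along $v_j^\epsilon\rightharpoonup \phi_j$ (the limit being $y$-independent) and combined with $h_\epsilon\to h$, gives $\int_0^1 h\,(\sum a_j\phi_j')^2\,dx\le M$. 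Since $\|\sum a_j\phi_j\|_{L^2(h)}^2=1$, the variational definition of $\mu_k(h)$ yields $\mu_k(h)\le M=\liminf_{\epsilon\to 0}\mu_k(\Omega_{\epsilon h_\epsilon})$.

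The main technical hurdles will be (i) transferring compactness and weak convergence uniformly across the varying base sets $\Omega_{h_\epsilon}$, most naturally by extending all functions by zero to a fixed reference rectangle $(0,1)\times(-1,1)$; (ii) passing to the limit in the weighted $1$D norms near the endpoints, where $h$ may vanish and the Poincar\'e--Wirtinger constant on vertical slices might degenerate; and (iii) checking that the strong $L^2$ convergence of $h_\epsilon$ to $h$ suffices to pass from the identities $\int h_\epsilon \phi_i^\epsilon\phi_j^\epsilon\,dx\to\delta_{ij}$ to their weighted counterparts in the limit, which is where the uniform convergence of concave $h_\epsilon$ on compact subsets of $\{h>0\}$ will be essential.
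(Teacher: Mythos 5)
Your overall strategy is the same as the paper's: rescale the first $k+1$ eigenfunctions vertically to the fixed-height domains $\Omega_{h_\epsilon}$, observe that $\int(\partial_y v_j^\epsilon)^2=O(\epsilon^2)$ forces the limits to be $y$-independent, and feed the resulting one-dimensional span into the variational characterization of $\mu_k(h)$ via lower semicontinuity. The algebraic identity $\int(\partial_x w^\epsilon)^2+\epsilon^{-2}(\partial_y w^\epsilon)^2=\sum a_j^2\mu_j(\Omega_{\epsilon h_\epsilon})\le\mu_k(\Omega_{\epsilon h_\epsilon})$ is exactly the paper's $\max_\beta$ computation, and the endgame is identical. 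However, two of the ``technical hurdles'' you defer are precisely where the work lies, and as proposed they would not go through.

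First, you cannot extend the $v_j^\epsilon$ \emph{by zero} to the reference rectangle $R=(0,1)\times(-1,1)$: the zero extension of an $H^1(\Omega_{h_\epsilon})$ function with nonvanishing trace is not in $H^1(R)$, so Rellich compactness on $R$ is unavailable. The paper instead uses genuine Sobolev extension operators $E_\epsilon:H^1(\Omega_{h_\epsilon})\to H^1(R)$ with norms bounded uniformly in $\epsilon$; this is legitimate because the concave profiles $h_\epsilon$ (with $\sup h_\epsilon=1$) force the $\Omega_{h_\epsilon}$ to contain a ball of fixed radius and hence to satisfy a uniform cone condition. Second, your intermediate device of vertical averages $\phi_j^\epsilon$ creates a difficulty the paper avoids: to claim $\phi_j^\epsilon$ converges in $H^1$ (or even has bounded weighted Dirichlet energy) you must differentiate $h_\epsilon(x)^{-1}\int_{-h_\epsilon^-}^{h_\epsilon^+}v_j^\epsilon\,dy$ in $x$, which produces terms involving $(h_\epsilon^\pm)'$ and boundary traces of $v_j^\epsilon$; near the endpoints, where concave profiles may have unbounded slope and $h_\epsilon\to0$, these are not controlled by your estimates, and the phrase ``extract by a diagonal procedure'' does not supply the needed bound. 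The clean route is the paper's: take the weak $H^1(R)$ limit $\overline V_j$ of $E_\epsilon(v_j^\epsilon)$, deduce $\partial_y\overline V_j=0$ on $\Omega_h$ from $\int(\partial_yv_j^\epsilon)^2\to0$ together with convexity of the Dirichlet integrand and the $L^1$ convergence $\chi_{\Omega_{h_\epsilon}}\to\chi_{\Omega_h}$ (this last point also fixes the moving-domain issue in your final lower-semicontinuity step), and only then identify $\overline V_j$ with a function of $x$ alone. With those two repairs your argument coincides with the paper's; the Poincar\'e--Wirtinger step itself is fine (the slice lengths are bounded by $1$, so the constant does not degenerate), but it becomes unnecessary once the weak limit is known to be $y$-independent.
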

\begin{proof} In this proof we will denote by $C$ a constant that can change line by line but that does not depend on $\epsilon$.

Let $u_{k,\epsilon}$ be a Neumann eigenfunction associated to $\mu_k(\Omega_{\epsilon h_{\epsilon}})$, normalized in such a way that $||u_{k,\epsilon}||_{L^2(\Omega_{\epsilon h_{\epsilon}})}=1$, we define the following function in $H^1(\Omega_{h_\epsilon})$
\begin{equation*}
\overline{u}_{k,\epsilon}(x_1,x_2)=\epsilon^{\frac{1}{2}}u_{k,\epsilon}(x_1,\epsilon x_2)\  \forall (x_1,x_2)\in \Omega_{ h_{\epsilon} }. 
\end{equation*}
We want to prove the following bound
\begin{equation}\label{eqBoundH1}
||\overline{u}_{k,\epsilon}||_{H^1(\Omega_{h_{\epsilon}})}\leq C.
\end{equation}
We start by the bound of $||\nabla \overline{u}_{k,\epsilon} ||_{L^2(\Omega_{h_{\epsilon}})}$,
\begin{equation}\label{eq3bnd}
\int_{\Omega_{h_{\epsilon}}}|\nabla \overline{u}_{k,\epsilon}|^2 dx\leq \epsilon \int_{\Omega_{h_{\epsilon}}} \Big( \frac{\partial {u}_{k,\epsilon}}{\partial x_1} \Big )^2+\frac{1}{\epsilon^2}  \Big( \frac{\partial u_{k,\epsilon}}{\partial x_2} \Big )^2 dx \leq \int_{\Omega_{\epsilon{h_{\epsilon}}}}|\nabla u_{k,\epsilon}|^2 dy = \mu_k(\Omega_{\epsilon h_{\epsilon}}) \leq C
\end{equation}
where we did the change of coordinates $y_1=x_1$, $y_2=\epsilon x_2$ and the last inequality comes from the fact that the eigenvalue
$\mu_k$ is uniformly bounded when the diameter is fixed, see \cite{HM23}. 
We recall that $h_{\epsilon}\rightarrow h$ in $L^2(0,1)$, moreover $h_{\epsilon}\in \mathcal{L}$ and $h\in \mathcal{L}$ so by Lemma $3.6$ in \cite{HM22} we have that
$\mu_k(h_{\epsilon})\rightarrow \mu_k(h)$, in particular we conclude that there exists a constant (independent on $\epsilon$) such that $||\nabla \overline{u}_{k,\epsilon} ||_{L^2(\Omega_{h_{\epsilon}})}\leq C$. Using the same change of variable we obtain $||\overline{u}_{k,\epsilon}||_{L^2(\Omega_{h_{\epsilon}})}=1$.

Now, since the functions $h_\epsilon$ and $h$ are concave, the $L^2$ convergence implies in fact the uniform convergence of  $h_\epsilon$ to $h$
on every compact subset of $(0,1)$. Therefore,
the domains $\Omega_{h_\epsilon}$ are a sequence of convex domains containing a ball of fixed radius $\frac{1}{4}$ and in particular they satisfy 
a uniform cone condition, see \cite[Proposition 2.4.4]{HePi}. Let $R$ be the rectangle defined by $R=(0,1)\times (-1,1)$, from \cite{J81} we conclude that there exists a sequence of extensions operators  $E_{\epsilon}$ such that 
\begin{align*}
    &E_{\epsilon}:H^1(\Omega_{h_{\epsilon}})\rightarrow H^1(R)\\
    &E_{\epsilon}(f)=f\,\, \text{in}\,\, \Omega_{h_{\epsilon}} \\
    &||E_\epsilon||\leq C.
\end{align*}
Thanks to the properties of the extension operators $E_\epsilon$ with the estimate \eqref{eqBoundH1} we conclude that there exists $\overline{V}_k\in H^1(R)$ such that, up to a sub-sequence, have 
\begin{equation}\label{eqConvergenceEigenf}
E_\epsilon(\overline u_{k,\epsilon}) \rightharpoonup \overline{V}_k \quad \text{in} \quad H^1(R), \qquad\mbox{and strongly in $L^2$}.
\end{equation}
We want to prove that
\begin{equation}\label{eqVknotdependonx2}
    \frac{\partial \overline{V}_k}{\partial x_2}=0\,\, \text{on}\,\, \Omega_h.
\end{equation}
We start by noticing that:
\begin{equation*}
\int_{\Omega_{h_\epsilon}} \big ( \frac{\partial \overline{u}_{k,\epsilon} }{\partial x_2} \big )^2dx=\epsilon^3 \int_{\Omega_{\epsilon h_\epsilon}} \big ( \frac{\partial u_{k,\epsilon} }{\partial x_2} \big )^2dx\leq C \epsilon^2 \rightarrow 0
\end{equation*}
the last inequality coming from \eqref{eq3bnd}.
In particular we have the following equality 
\begin{align}\label{eqDerivVk}
0&=\liminf_{\epsilon\rightarrow 0} \int_{\Omega_{h_\epsilon}} \big ( \frac{\partial \overline{u}_{k,\epsilon} }{\partial x_2} \big )^2dx=\\ \notag
&=\liminf_{\epsilon\rightarrow 0} 
\int_R \chi_{\Omega_{h_\epsilon}} \big ( \frac{\partial E_\epsilon(\overline u_{k,\epsilon}) }{\partial x_2} \big )^2dx-\int_R \chi_{\Omega_h} \big ( \frac{\partial E_\epsilon(\overline u_{k,\epsilon}) }{\partial x_2} \big )^2dx+\int_R \chi_{\Omega_h} \big ( \frac{\partial E_\epsilon(\overline u_{k,\epsilon}) }{\partial x_2} \big )^2dx,
\end{align}
where $\chi_{\Omega}$ is the indicator function of the set $\Omega$. We know that 
\begin{equation*}
   \liminf_{\epsilon\rightarrow 0}  \int_R \chi_{\Omega_h} \big ( \frac{\partial E_\epsilon(\overline u_{k,\epsilon}) }{\partial x_2} \big )^2dx\geq \int_R \chi_{\Omega_h} \big ( \frac{\partial \overline{V}_k  }{\partial x_2} \big )^2dx,
\end{equation*}
because the functional is convex with respect to the gradient variable, see \cite[Section 8.2]{Evans}. Moreover also the following equality holds 

\begin{equation*}
\liminf_{\epsilon\rightarrow 0} \Big | \int_R \big ( \chi_{\Omega_{h_\epsilon}}- \chi_{\Omega_h} \big ) \big ( \frac{\partial E_\epsilon(\overline u_{k,\epsilon}) }{\partial x_2} \big )^2dx\Big |=0,
\end{equation*}
because  the uniform convergence of $h_\epsilon$ to $h$ on every compact subset of $(0,1)$ implies that
$\chi_{\Omega_{h_\epsilon}}\rightarrow \chi_{\Omega_{h}}$ in $L^1(R)$ and $E_\epsilon(\overline u_{k,\epsilon})$ is bounded in $H^1(R)$. From the above estimates and from \eqref{eqDerivVk} we finally have:
\begin{equation*}
    \int_R \chi_{\Omega_h} \big ( \frac{\partial \overline{V}_k  }{\partial x_2} \big )^2dx=0
\end{equation*}
that implies \eqref{eqVknotdependonx2}. Now from the variational formulation of the Neumann eigenvalues:

\begin{align*}
\liminf_{\epsilon\rightarrow 0} \mu_k(\Omega_{\epsilon h_{\epsilon}})=&\liminf_{\epsilon\rightarrow 0} \max_{\beta\in \mathbb{R}^{k+1}}\frac{\sum_i \beta_i^2\int_{\Omega_{\epsilon h_{\epsilon}}}|\nabla u_{i,\epsilon}|^2dx}{\sum_i \beta_i^2\int_{\Omega_{\epsilon h_{\epsilon}}} u_{\epsilon}^2dx} \\
\geq & \liminf_{\epsilon\rightarrow 0} \max_{\beta\in \mathbb{R}^{k+1}}\frac{\sum_i \beta_i^2\int_{\Omega_{h_{\epsilon}}}|\nabla \overline{u}_{i,\epsilon}|^2dx}{\sum_i \beta_i^2\int_{\Omega_{ h_{\epsilon}}} \overline{u}_{\epsilon}^2dx} \\
\geq & \liminf_{\epsilon\rightarrow 0} \max_{\beta\in \mathbb{R}^{k+1}}\frac{\sum_i \beta_i^2\int_R \chi_{\Omega_{h_\epsilon}} |\nabla E_\epsilon(\overline{u}_{i,\epsilon})|^2dx}{\sum_i \beta_i^2\int_R \chi_{\Omega_{h_\epsilon}} (E_\epsilon{\overline{u}_{\epsilon}})^2dx}.
\end{align*}
We denote by $V_k$ the restriction on the $x_1$ axis of the limit function $\overline{V}_k$, from the convergence in \eqref{eqConvergenceEigenf} and from \eqref{eqVknotdependonx2} we have that for every $i=1,...,k+1$ we have that 
\begin{equation*}
    \lim_{\epsilon\rightarrow 0}\int_R \chi_{\Omega_{h_\epsilon}} (E_\epsilon({\overline{u}_{i,\epsilon}}))^2dx= \int_R \chi_{\Omega_h} \overline{V}_i^2dx=\int_0^1 hV_i^2dx_1.
\end{equation*}
Now using the same argument we used in order to prove \eqref{eqVknotdependonx2} we obtain:
\begin{equation*}
  \liminf_{\epsilon\rightarrow 0} \int_R \chi_{\Omega_{h_\epsilon}} |\nabla E_\epsilon(\overline{u}_{i,\epsilon})|^2dx\geq \int_R \chi_{\Omega_h} |\nabla \overline{V}_i|^2dx= \int_0^1 hV_i'^2dx_1.
\end{equation*}
So we conclude that:
\begin{equation*}
\liminf_{\epsilon\rightarrow 0} \mu_k(\Omega_{\epsilon h_{\epsilon}})\geq   \max_{\beta\in \mathbb{R}^{k+1}}\frac{\sum_i \beta_i^2 \int_0^1 hV_i'^2dx_1}{\sum_i \beta_i^2\int_0^1 hV_i^2dx_1}\geq \mu_k(h),  
\end{equation*}
where the last inequality came from the variational characterization of $\mu_k(h)$.
\end{proof}
We want to find what is the best geometry in which a sequence of domains must collapse in order to get the lowest possible value of the Neumann eigenvalues at the limit. For this reason we are interested in the following minimization problem:
\begin{equation*}
    \inf\{\mu_k(h) \, |\, h\in \mathcal{L} \},
\end{equation*}
in particular we prove the following lemma
\begin{lemma}\label{lemMinSL}
 The following equality holds:
 \begin{equation*}
    \min\{\mu_k(h) \, |\, h\in \mathcal{L} \}=(k\pi)^2
\end{equation*}
the minimizer is given by the function $h\equiv 1$.
\end{lemma}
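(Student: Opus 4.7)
The plan is to prove the two directions separately. The inequality $\inf_{\mathcal{L}} \mu_k \leq (k\pi)^2$ is immediate: when $h\equiv 1$ the Sturm--Liouville problem of Definition~\ref{dMUk} reduces to $-u'' = \mu u$ on $(0,1)$ with Neumann boundary conditions, whose $k$-th eigenvalue is $(k\pi)^2$ with eigenfunction $\cos(k\pi x)$; since $1\in\mathcal{L}$, the minimum is at most $(k\pi)^2$ and is attained at $h\equiv 1$. The bulk of the work is the reverse bound $\mu_k(h)\geq (k\pi)^2$ for every $h\in\mathcal{L}$, which I plan to obtain through a nodal decomposition of the $k$-th eigenfunction together with the classical one-dimensional Payne--Weinberger inequality.

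Fix $h\in\mathcal{L}$ and let $u_k$ be an eigenfunction associated with $\mu_k(h)$. By the Sturm oscillation theorem for regular Sturm--Liouville problems, $u_k$ has exactly $k$ zeros in $(0,1)$. The Neumann-type boundary condition $hu_k'|_{\{0,1\}}=0$ combined with the regularity of $u_k$ at any endpoint where $h$ vanishes (there $h$ vanishes linearly, because $h$ is concave with $\sup h=1$, and the resulting Bessel-type equation admits a unique admissible solution with vanishing derivative) forces $u_k'(0) = u_k'(1) = 0$. A Pr\"ufer-angle analysis then shows that $u_k$ has exactly $k+1$ local extrema, namely the two endpoints together with one point in each open interval between two consecutive zeros. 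Denoting these extrema by $0 = y_0 < y_1 < \cdots < y_k = 1$ and the sub-intervals by $I_i=[y_i,y_{i+1}]$ of length $\ell_i = y_{i+1}-y_i$ for $i=0,\ldots,k-1$, on each $I_i$ the function $u_k|_{I_i}$ is an eigenfunction of the weighted Sturm--Liouville problem on $I_i$ with homogeneous Neumann conditions at both endpoints, with eigenvalue $\mu_k(h)$, and with exactly one interior zero; hence by Sturm oscillation on the sub-problem it is the first non-trivial Neumann eigenfunction and
\begin{equation*}
\mu_k(h) = \mu_1\bigl(h|_{I_i};\, I_i\bigr).
\end{equation*}

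The classical one-dimensional Payne--Weinberger inequality, established in \cite{PW60} as the key 1D lemma underlying their theorem, asserts that for any nonnegative concave weight $h$ on an interval of length $\ell$ the first non-zero Neumann eigenvalue of the weighted Sturm--Liouville problem is at least $\pi^2/\ell^2$ (the normalization $\sup h = 1$ plays no role here, both the hypothesis on $h$ and the eigenvalue being invariant under scaling of $h$). Applied to each sub-problem, this gives $\mu_k(h) \geq \pi^2/\ell_i^2$, i.e.\ $\ell_i\geq \pi/\sqrt{\mu_k(h)}$ for every $i$. Summing the $k$ lengths,
\begin{equation*}
1 \;=\; \sum_{i=0}^{k-1}\ell_i \;\geq\; \frac{k\pi}{\sqrt{\mu_k(h)}}, \qquad\text{hence}\qquad \mu_k(h)\geq(k\pi)^2,
\end{equation*}
which is the desired lower bound. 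The main step is the invocation of the 1D Payne--Weinberger inequality on each sub-interval; the only subtlety is the extremum count at endpoints where $h$ may vanish, which is handled by the local Bessel analysis noted above.
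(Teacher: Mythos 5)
Your proof follows essentially the same route as the paper's: both decompose $(0,1)$ at the $k-1$ interior critical points of the $k$-th eigenfunction into $k$ subintervals, on each of which the restriction of $u_k$ is a first nontrivial Neumann eigenfunction of the weighted problem, apply the Payne--Weinberger bound $\mu_k(h)\geq \pi^2/\ell_i^2$ on each piece, and combine with $\sum_i\ell_i=1$. The only differences are immaterial: you invoke the one-dimensional concave-weight lemma of \cite{PW60} directly, where the paper re-derives that bound by collapsing a thin two-dimensional domain over each subinterval and appealing to the two-dimensional Payne--Weinberger inequality; and you sum the lengths $\ell_i\geq\pi/\sqrt{\mu_k(h)}$ directly, where the paper sums the reciprocal bounds and uses convexity of $t\mapsto 1/t^2$ (your version is slightly cleaner). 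Two small remarks. First, the parenthetical claim that a nonnegative concave $h$ with $\sup h=1$ must vanish \emph{linearly} at an endpoint is false (it can vanish like $\sqrt{x}$), but nothing is lost: the restriction of $u_k$ to the two extreme subintervals is a weak Neumann eigenfunction there directly from the variational formulation (the boundary terms $h u_k' v$ vanish at $0$, $1$ and at the interior critical points), so the Bessel/Pr\"ufer discussion of $u_k'(0)$, $u_k'(1)$ is not needed. Second, the lemma also asserts that $h\equiv 1$ is the minimizer; you should add the one-line equality analysis (equality in each Payne--Weinberger bound and in the length balance forces $h\equiv 1$), which the paper records at the end of its proof.
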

\begin{proof}
Let $h\in \mathcal{L}$, we denote by $u$ a normalized eigenfunction associated to $\mu_k(h)$. The function $h$ being continuous, $u$ is $C^1$.
The Sturm-Liouville eigenfunctions are Courant sharp, this means that the eigenfunction $u$ has $k+1$ nodal intervals. In particular there are at least $k-1$ points $x_1,...,x_{k-1}$ such that $u'(x_i)=0$ for all $i=1,...,k-1$. We define the following lengths $l_1=x_1,l_2=x_2-x_1,...,l_k=1-x_{k-1}$.

We recall that $u$ solves the equation:
\begin{equation*}
\begin{cases}
\vspace{0.3cm}
   -\frac{d}{dx}\big(h(x)\frac{du}{dx}(x)\big)=\mu(h) h(x)u(x)  \qquad  x\in \big(0,1\big) \\
      h(0)\frac{du}{dx}(0)=h(1)\frac{du}{dx}(1)=0.
\end{cases}
\end{equation*}
We analyze the interval $[0,x_1]$, we know that $u'(x_1)=0$, in particular $u$ is a solution of the following problem:
\begin{equation*}
\begin{cases}
\vspace{0.3cm}
   -\frac{d}{dx}\big(h(x)\frac{du}{dx}(x)\big)=\mu(h) h(x)u(x)  \qquad  x\in \big(0,x_1\big) \\
      h(0)\frac{du}{dx}(0)=h(x_1)\frac{du}{dx}(x_1)=0.
\end{cases}
\end{equation*}
We define the following set:
\begin{equation*}
\Omega_\epsilon=\{(x,y)\in \mathbb{R}^2 \;|\,\, 0\leq x\leq x_1, \,\, 0\leq y\leq \epsilon h(x) \},
\end{equation*}
from Lemma $3.5$ in \cite{HM22} we have that 
\begin{equation}\label{eqLimitNeummx1}
    \mu_k(\Omega_\epsilon)\rightarrow \mu_k(h).
\end{equation}
We claim that 
\begin{equation}\label{eqCond1equalityh}
    \mu_k(h)\geq \frac{\pi^2}{l_1^2}.
\end{equation}
In order to prove this, suppose that 
\begin{equation*}
    \mu_k(h)< \frac{\pi^2}{l_1^2},
\end{equation*}
then from the convergence \eqref{eqLimitNeummx1} we have that there exists an $\overline{\epsilon}$ such that 
\begin{equation*}
 \mu_k(\Omega_{\overline{\epsilon}})<\frac{\pi^2}{D(\Omega_{\overline{\epsilon}})^2},
\end{equation*}
which is a contradiction with the Payne Weinberger inequality \cite{PW60}.

We can apply the same argument to each intervals $[0,x_1], [x_1,x_2],...,[x_{k-1},1]$, obtaining:
\begin{equation*}
   \mu_k(h)\geq \frac{\pi^2}{l_i^2}, \,\,\, \forall \, i=1,...,k.
\end{equation*}
We sum all this relations and we obtain 
\begin{equation*}
   \mu_k(h)\geq \Big ( \frac{1}{k}\sum_{i=1}^{k}\frac{1}{l_i^2} \Big)\pi^2.
\end{equation*}
From the fact that $\sum_{i=1}^{k}l_i=1$ and the convexity of the function $t\mapsto 1/t^2$ it follows
\begin{equation}\label{eqCond2equalityh}
   \frac{1}{k}\sum_{i=1}^{k}\frac{1}{l_i^2}\geq k^2,
\end{equation}
in particular $\mu_k(h)\geq (k\pi)^2$. Moreover we have equality in the inequalities \eqref{eqCond1equalityh} and  \eqref{eqCond2equalityh} only if $h\equiv 1$. 
\end{proof}
We are ready to prove the existence of an open convex domain for the minimization problem under diameter and convexity constraint for eigenvalues with index $k\geq2$. In the case of the first eigenvalue we already know  that the minimizer does not exists and the minimizing sequence is given by a sequence of collapsing rectangles (see \cite{PW60}). 
\begin{theorem}\label{thExInfD}
Let $k\geq 2$ then there exists an open and convex set $\Om^*\subset \RR^2$ such that 
\begin{equation*}
D(\Omega^*)^2\mu_k(\Omega^*)= \inf \{D(\Omega)^2\mu_k(\Omega),\, \Omega \subset \mathbb{R}^2, \text{ bounded, open and convex } \}   
\end{equation*}
\end{theorem}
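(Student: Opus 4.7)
My plan is to follow the concentration-compactness strategy outlined at the beginning of the section: take a minimizing sequence, extract a Hausdorff-convergent subsequence by Blaschke's selection theorem, and rule out degeneration to a segment by combining the lower bound of Lemma \ref{lemAsymptoticmu} with the explicit minimum $(k\pi)^2$ obtained in Lemma \ref{lemMinSL}.

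By the scale invariance of $D^2\mu_k$, I normalize a minimizing sequence $(\Omega_n)$ so that $D(\Omega_n)=1$. After translation, all $\Omega_n$ lie in a fixed ball, so Blaschke selection yields a subsequence (not relabeled) converging in the Hausdorff metric to a compact convex set $K^*$ with $D(K^*)=1$. The dichotomy recalled in the section produces two cases: either $K^*$ has nonempty interior, or $K^*$ is a segment of length one. In the non-collapse case I set $\Omega^*=\operatorname{int}(K^*)$; the Hausdorff continuity of Neumann eigenvalues on convex sets (cf.\ \cite{HePi}) gives $\mu_k(\Omega_n)\to\mu_k(\Omega^*)$ and $D(\Omega_n)\to D(\Omega^*)=1$, so $\Omega^*$ realizes the infimum.

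The main obstacle is excluding the segment case. Assuming $K^*$ is a segment, after rotating and translating I can take it to be $[0,1]\times\{0\}$. The projection of $\Omega_n$ on the $x$-axis is an interval $[a_n,b_n]$ with $a_n\to 0$ and $b_n\to 1$. Using convexity, after a linear rescaling in $x$ (which perturbs $\mu_k$ by a factor tending to $1$), I write
\[
\Omega_n=\{(x,y): 0\le x\le 1,\ -\epsilon_n h_n^-(x)\le y\le \epsilon_n h_n^+(x)\},
\]
where $h_n^{\pm}$ are nonnegative and concave, $\epsilon_n\to 0$, and the normalization $\sup(h_n^++h_n^-)=1$ forces $h_n:=h_n^++h_n^-\in\mathcal{L}$. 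The class $\mathcal{L}$ is precompact in $L^2(0,1)$ (Helly's theorem for concave functions together with dominated convergence), so along a further subsequence $h_n\to h$ in $L^2(0,1)$ with $h\in\mathcal{L}$. Lemma \ref{lemAsymptoticmu} and Lemma \ref{lemMinSL} then give
\[
\liminf_{n\to\infty}\mu_k(\Omega_n)\ge \mu_k(h)\ge (k\pi)^2,
\]
and since $D(\Omega_n)\to 1$ this yields $\liminf_n D(\Omega_n)^2\mu_k(\Omega_n)\ge (k\pi)^2$.

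To obtain a contradiction I exhibit a single competitor violating this bound. The unit-diameter disk $B$ does the job for every $k\ge 2$: its Neumann spectrum is $\{4(j'_{m,\ell})^2\}$ where $j'_{m,\ell}$ are the zeros of the derivatives of Bessel functions, and a direct check (using that $j'_{1,1}<\pi$, $j'_{2,1}<3\pi/2$, and more generally $j'_{m,\ell}\lesssim m+\ell$ against the quadratic growth of $(k\pi)^2$) gives $D(B)^2\mu_k(B)<(k\pi)^2$ for all $k\ge 2$. This strict inequality contradicts the lower bound obtained under collapse, so the segment case is impossible and the non-collapse analysis produces the desired minimizer $\Omega^*$.
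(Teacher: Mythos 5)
Your proof is correct and follows essentially the same route as the paper: normalize the minimizing sequence, apply Blaschke selection, rule out collapse to a segment by combining Lemma \ref{lemAsymptoticmu} with Lemma \ref{lemMinSL} to get the lower bound $(k\pi)^2$, and exhibit a competitor beating that bound. The only (minor) difference is the competitor: you use the unit-diameter disk, for which $D^2\mu_k = 4(j'_{m,\ell})^2 < (k\pi)^2$ for all $k\ge 2$ (your verification via the growth of the zeros $j'_{m,\ell}$ is a little informal but correct), whereas the paper uses the unit square together with P\'olya's inequality for tiling domains plus a separate check for $k=2$; both choices work.
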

\begin{proof}
Let $\Omega_\epsilon$ be a minimizing sequence. Thanks to the Blaschke selection theorem we have two possibilities:

\begin{enumerate}
    \item $\Omega_\epsilon$ converge in Hausdorff sense to an open convex set $\Omega$,
    \item the sequence $\Omega_{\epsilon}$ collapse to a segment.
\end{enumerate}
Let us assume that the second outcome happens and denote $\Omega_\epsilon$ the minimizing sequence. Without loss of generality we can assume that $D(\Omega_\epsilon)=1$ for all $\epsilon$. We parametrize the sequence of domains via the functions
$h_{\epsilon}\in \mathcal{L}$ such that $h_{\epsilon}=h_{\epsilon}^++h_{\epsilon}^-$ (the particular choice of $h_{\epsilon}^+$ and $h_{\epsilon}^-$ is not important):
\begin{equation*}
\Omega_{\epsilon  h_{\epsilon}}=\{(x,y)\in \mathbb{R}^2 \;|\,\, 0\leq x\leq 1, \,\, -\epsilon h_{\epsilon}^-(x)\leq y\leq \epsilon h_{\epsilon}^+(x) \}.
\end{equation*}
The sequence of functions $h_\epsilon$ are in  $\mathcal{L}$ so up to a subsequence we know that there exists a function $h\in \mathcal{L}$ such that $h_\epsilon \rightarrow h$ in $L^2(0,1)$ (see for instance Lemma $3.1$ in \cite{HM22}). From Lemma \ref{lemAsymptoticmu} and Lemma \ref{lemMinSL} we have that 
\begin{equation*}
    \liminf \mu_k(\Omega_{\epsilon h_\epsilon})\geq  \mu_k(h)\geq (k\pi)^2.
\end{equation*}
and therefore 
\begin{equation*}
    \liminf D^2\mu_k(\Omega_{\epsilon h_\epsilon})\geq (k\pi)^2.
\end{equation*}
Now to complete the proof, it suffices to find a convex domain with $D^2(\Omega)\mu_k(\Omega) < (k\pi)^2$. For that purpose, let us consider the unit square $Q$, since
it is a tiling domain, the Poly\`a inequality, see \cite{Polya} holds true:
$$\mu_k(Q) < 4\pi k/|Q|=4\pi k.$$
This yields $D^2(Q) \mu_k(Q) < 8\pi k$ and this is sufficient to conclude for $k\geq 3$, while for $k=2$, $D^2(Q) \mu_2(Q) =2\pi^2 < 4\pi^2$.
\end{proof}
\begin{theorem}\label{thExInfP}
For $k\geq 2$, there exists an open and convex set $\Om^*\subset \RR^2$ such that 
\begin{equation*}
P(\Omega^*)^2\mu_k(\Omega^*)= \inf \{P(\Omega)^2\mu_k(\Omega),\, \Omega \subset \mathbb{R}^2, \text{ bounded, open and convex } \}   
\end{equation*}
\end{theorem}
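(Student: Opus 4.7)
The plan is to mimic the proof of Theorem \ref{thExInfD}, exploiting the scale-invariance of $P^2\mu_k$ to reduce the collapsing case to the one-dimensional analysis carried out in Lemmas \ref{lemAsymptoticmu} and \ref{lemMinSL}. Let $\Omega_n$ be a minimizing sequence; by scale-invariance I may rescale so that $D(\Omega_n)=1$ for every $n$. By the Blaschke selection theorem, up to a sub-sequence either $\Omega_n$ converges in Hausdorff distance to an open convex set $\Omega^*$, in which case the continuity of $P$ and $\mu_k$ with respect to Hausdorff convergence immediately gives that $\Omega^*$ is a minimizer, or $\Omega_n$ collapses to a segment. It suffices to rule out this second possibility.

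In the collapsing case, the limiting segment has length $1$ (since $D(\Omega_n)=1$), so $P(\Omega_n)\to 2$. Parametrizing $\Omega_n$ by functions $h_n\in\mathcal{L}$ exactly as in the proof of Theorem \ref{thExInfD} and extracting a further sub-sequence with $h_n\to h$ in $L^2(0,1)$ for some $h\in\mathcal{L}$, Lemmas \ref{lemAsymptoticmu} and \ref{lemMinSL} together yield
\begin{equation*}
\liminf_{n\to\infty} \mu_k(\Omega_n) \;\geq\; \mu_k(h) \;\geq\; (k\pi)^2.
\end{equation*}
Combined with $P(\Omega_n)^2\to 4$, this gives $\liminf_n P^2(\Omega_n)\mu_k(\Omega_n) \geq 4(k\pi)^2$, which contradicts the minimizing property as soon as we know the infimum is strictly below $4(k\pi)^2$.

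It therefore remains to exhibit, for every $k\geq 2$, an open convex set $\Omega$ with $P^2(\Omega)\mu_k(\Omega) < 4(k\pi)^2$; I expect this to be the main difficulty of the proof, because the lower bound $4(k\pi)^2$ is asymptotically sharp on the collapsing rectangles $(0,1)\times (0,\delta)$ as $\delta\to 0$, so no rectangular competitor will work, and a single universal competitor does not seem to exist. For $k=3$ the unit square $Q$ suffices, since $\mu_3(Q)=2\pi^2$ and $P(Q)=4$ yield $P^2(Q)\mu_3(Q)=32\pi^2 < 36\pi^2$. For $k=2$ and for all $k\geq 4$ I would use instead the unit disk $B$, for which $P^2(B)\mu_k(B)=4\pi^2 (j'_{m,n})^2$, where $(j'_{m,n})^2$ denotes the $k$-th positive Neumann eigenvalue of $B$; the required strict inequality $(j'_{m,n})^2 < k^2$ is straightforward to check from standard tables of zeros of derivatives of Bessel functions for small values of $k$, and follows for all large $k$ from the Weyl asymptotics $\mu_k(B)=O(k)$.
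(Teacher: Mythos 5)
Your proof follows the same strategy as the paper's: rescale to unit diameter, use Blaschke selection, handle the collapsing case via Lemmas \ref{lemAsymptoticmu} and \ref{lemMinSL} together with $P(\Omega_n)\to 2$ to get the lower bound $4(k\pi)^2$, and then beat this bound with an explicit competitor. The only divergence is in the competitor step. The paper uses the unit square for $k\geq 6$ (via the P\'olya inequality for tiling domains, $\mu_k(Q)<4\pi k$, so $16\,\mu_k(Q)<64\pi k<4k^2\pi^2$ precisely when $k>16/\pi$), the square again for $k=3$, and the unit disk for $k\in\{2,4,5\}$. You use the square for $k=3$ and the disk for $k=2$ and all $k\geq 4$; this also works, but your justification of $\mu_k(B)<k^2$ for \emph{all} $k\geq 4$ is the one soft point: ``tables for small $k$'' plus ``Weyl asymptotics for large $k$'' does not by itself cover every intermediate index, since Weyl's law comes with no explicit threshold. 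This is easily repaired: either invoke the P\'olya inequality for the disk, proved in \cite{FLP23} and already used in the paper for the maximization result, which gives $\mu_k(B)\leq 4k<k^2$ for all $k\geq 5$ (and $\mu_4(B)={j'_{2,1}}^2\approx 9.33<16$ by direct inspection), or switch to the paper's tiling-square argument for $k\geq 6$. With that single effective bound supplied, your proof is complete and essentially equivalent to the paper's.
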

\begin{proof}
We argue like the prof of Theorem \ref{thExInfD}. We consider a sequence of collapsing domains $\Omega_n$ such that $D(\Omega_n)=1$, it is easy to check that $P(\Omega_n)\rightarrow 2$. Therefore, we need to find a convex set $\Omega_k$ such that $P(\Omega_k)^2\mu_k(\Omega_k)\leq 4(k\pi)^2 $.
The unit square still works for $k\geq 16/\pi$ with the same argument of Poly\`a inequality. The square also works for $k=3$
($\mu_3(R)=2\pi^2$) and for $k=2,4,5$ we can consider the unit disk whose eigenvalues are respectively ${j'_{1,1}}^2,{j'_{2,1}}^2, {j'_{0,2}}^2$.
\end{proof}
\begin{theorem}\label{thExSupP}
There exists an open and convex set $\Om^*\subset \RR^2$ such that 
\begin{equation*}
P(\Omega^*)^2\mu_k(\Omega^*)= \sup \{P(\Omega)^2\mu_k(\Omega),\, \Omega \subset \mathbb{R}^2, \text{ bounded, open and convex } \}   
\end{equation*}
\end{theorem}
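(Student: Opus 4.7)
The plan is to reuse the template of Theorems \ref{thExInfD} and \ref{thExInfP}. I take a maximizing sequence $\{\Omega_n\}$ and apply the Blaschke selection theorem: either, up to a subsequence, $\Omega_n$ converges in Hausdorff distance to a convex open set $\Omega^*\subset\mathbb{R}^2$ with non-empty interior, in which case continuity of the perimeter and of the Neumann spectrum along such sequences of convex bodies (see \cite{HePi}) yields at once that $\Omega^*$ realizes the supremum; or $\Omega_n$ collapses to a segment. The whole job is to exclude this second scenario.

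Assume collapsing and rescale so that $D(\Omega_n)=1$. As already observed in the proof of Theorem \ref{thExInfP}, one then has $P(\Omega_n)\to 2$. Combining this with the universal planar upper bound recalled in the introduction, $D^2(\Omega)\mu_k(\Omega)\leq C_{k,2}=(2j_{0,1}+(k-1)\pi)^2$ for every bounded open convex $\Omega\subset\mathbb{R}^2$, gives $\mu_k(\Omega_n)\leq C_{k,2}$ and therefore
\begin{equation*}
\limsup_{n\to\infty}P^2(\Omega_n)\mu_k(\Omega_n)\leq 4\,C_{k,2}.
\end{equation*}

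It is then enough to exhibit one bounded open convex set $\Omega_0\subset\mathbb{R}^2$ with $P^2(\Omega_0)\mu_k(\Omega_0)>4 C_{k,2}$. I would take the rectangle $R_b=(0,1)\times(0,b)$ with $b=1/k-\delta$ and $\delta>0$ small enough to ensure $1/(k+1)<b<1/k$. The Neumann eigenvalues of $R_b$ are $\pi^2(m^2+n^2/b^2)$ with $(m,n)\in\mathbb{N}^2\setminus\{(0,0)\}$; since $\pi^2/b^2>\pi^2 k^2$ and $\pi^2 j^2<\pi^2 k^2$ for $j<k$, the $k$-th nonzero eigenvalue equals $\pi^2 k^2$, attained by $(m,n)=(k,0)$. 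With $P(R_b)=2(1+b)$ this gives
\begin{equation*}
P^2(R_b)\mu_k(R_b)=4(1+b)^2 k^2\pi^2\ \xrightarrow[\delta\to 0^+]{}\ 4(k+1)^2\pi^2,
\end{equation*}
and the elementary inequality $j_{0,1}<\pi$ forces $(k+1)\pi>2 j_{0,1}+(k-1)\pi$, hence $4(k+1)^2\pi^2>4 C_{k,2}$. Taking $\delta$ small enough finishes the argument.

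As in the previous existence theorems, the only delicate step is ruling out the collapsing alternative. Here it is particularly soft: rather than invoking the sharp Sturm--Liouville asymptotics of Lemma \ref{lemAsymptoticmu}, the universal bound $\mu_k\leq C_{k,2}/D^2$ combined with $P\to 2D$ along collapsing convex sequences already forces $\limsup P^2\mu_k\leq 4 C_{k,2}$, a value comfortably beaten by the thin rectangles above, so the Blaschke alternative produces the maximizer.
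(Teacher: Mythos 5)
Your proof is correct and follows essentially the same route as the paper: the collapsing alternative is excluded by the bound $\limsup P^2(\Omega_n)\mu_k(\Omega_n)\le 4(2j_{0,1}+(k-1)\pi)^2$ (obtained from $P(\Omega_n)\to 2$ together with the universal diameter bound, which the paper cites from \cite{HM23}), and this value is then beaten by a thin rectangle --- the paper uses $[0,k]\times[0,1]$ with $\mu_k=\pi^2$, which is a rescaling of your $R_b$ in the limit $\delta\to 0$. The only cosmetic differences are that you perturb the aspect ratio to make the identification of $\mu_k$ unambiguous and you spell out the elementary inequality $j_{0,1}<\pi$, which the paper leaves implicit.
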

\begin{proof}
We argue like the prof of Theorem \ref{thExInfP}. We consider a sequence of collapsing domains $\Omega_n$ such that $D(\Omega_n)=1$ from \cite{HM23} we know that
\begin{equation*}
\limsup P(\Omega_n)^2\mu_k(\Omega_n)\leq 4(2j_{0,1}+(k-1)\pi)^2,
\end{equation*}
where $j_{0,1}$ is the first zero of the Bessel function $J_0$. Consider the rectangle $\Omega_k=[0,k]\times [0,1]$, its $k$-th eigenvalue is $\pi^2$, so
we have that $P(\Omega_k)^2\mu_k(\Omega_k)=(2k+2)^2\pi^2>4(2j_{0,1}+(k-1)\pi)^2$ .
\end{proof}
\begin{theorem}\label{thExSupP3}
There exists an open and convex set $\Om^*\subset \RR^3$ such that 
\begin{equation*}
P(\Omega^*)\mu_1(\Omega^*)= \sup \{P(\Omega)\mu_1(\Omega),\, \Omega \subset \mathbb{R}^3, \text{ bounded, open and convex } \}   
\end{equation*}
\end{theorem}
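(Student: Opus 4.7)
The plan is to adapt the argument of Theorem~\ref{thExSupP} to three dimensions. Let $(\Omega_n)$ be a maximizing sequence of bounded open convex subsets of $\mathbb R^3$; by the scale invariance of $P\mu_1$ in $\mathbb R^3$, normalize by $D(\Omega_n)=1$. Then $P(\Omega_n)$ is uniformly bounded by the isodiametric inequality for convex bodies, and $\mu_1(\Omega_n)$ is uniformly bounded by the universal estimate $D^2\mu_1\leq C_{1,3}$ (the three-dimensional analogue of the constant $C_{k,d}$ recalled in the introduction). Blaschke's selection theorem then yields a Hausdorff limit $K\subset\mathbb R^3$ compact and convex, up to translation and passing to a subsequence. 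If $K$ has non-empty interior, the Hausdorff continuity of the perimeter and of Neumann eigenvalues on non-degenerate convex bodies (see~\cite{HePi}) shows that $\Omega^*:=\mathrm{int}(K)$ is a maximizer.

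It remains to rule out degenerate Hausdorff limits. If $\dim K\leq 1$, then $P(\Omega_n)\to 0$ while $\mu_1(\Omega_n)$ stays bounded, so $P\mu_1\to 0$; since the unit ball shows the supremum is strictly positive, this case cannot arise. The delicate case is when $K$ is two-dimensional, contained in a plane $\Pi$ with $\mathcal H^2(K)>0$. Classical convex-geometry estimates on the areas of graphs of concave functions (the top and bottom of $\Omega_n$ flatten to horizontal graphs in the limit, while the lateral boundary shrinks) give $P(\Omega_n)\to 2\mathcal H^2(K)$. Combining this with the uniform estimate $\mu_1(\Omega_n)\leq C_{1,3}/D(\Omega_n)^2$, the diameter continuity $D(\Omega_n)\to D(K)$, and the planar isodiametric inequality $\mathcal H^2(K)\leq \pi D(K)^2/4$, one obtains
\[
\limsup_{n\to\infty} P(\Omega_n)\mu_1(\Omega_n) \;\leq\; 2\mathcal H^2(K)\,\frac{C_{1,3}}{D(K)^2} \;\leq\; \frac{\pi}{2}\,C_{1,3}.
\]

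To close the argument it suffices to exhibit a convex body whose value of $P\mu_1$ strictly exceeds $\frac{\pi}{2}C_{1,3}$. The 3D unit ball $B$ provides such an example: a direct computation gives $P(B)\mu_1(B)=4\pi\alpha^2$, with $\alpha\approx 2.082$ the first positive zero of the derivative of the spherical Bessel function $j_1$. The required inequality $\frac{\pi}{2}C_{1,3}<4\pi\alpha^2$ reduces to a numerical check once $C_{1,3}$ is bounded; the natural candidate $C_{1,3}=4j_{0,1}^2\approx 23.1$ (realized by 2D-collapsing extremal sequences analogous to the two-dimensional case) makes the comparison elementary since $j_{0,1}^2<4\alpha^2$. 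This rules out the two-dimensional collapse and completes the proof.

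The \emph{main obstacle} is verifying the two key ingredients of the degenerate analysis: the perimeter convergence $P(\Omega_n)\to 2\mathcal H^2(K)$ for three-dimensional convex bodies collapsing to a two-dimensional convex set (a standard but technical convex-geometry statement via the area formula for graphs of concave functions), and the explicit bounding of the constant $C_{1,3}$ in the universal estimate $D^2\mu_1\leq C_{1,3}$ in dimension three. Both are natural three-dimensional counterparts of the estimates already exploited in the planar theorems above; once established the rest is a direct comparison with the 3D ball.
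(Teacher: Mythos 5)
Your overall skeleton (Blaschke trichotomy, easy exclusion of the segment collapse, a quantitative bound for the collapse onto a planar convex set, then comparison with an explicit body) is exactly the paper's strategy, and the first two steps are fine. The gap is in the planar-collapse estimate and the final comparison. You bound $\mu_1(\Omega_n)\le C_{1,3}/D(\Omega_n)^2$ and multiply by $P(\Omega_n)\to 2\mathcal H^2(K)\le \pi D(K)^2/2$, obtaining $\limsup P\mu_1\le \tfrac{\pi}{2}C_{1,3}$, and then you \emph{guess} $C_{1,3}=4j_{0,1}^2\approx 23.1$. That value is wrong: in $\mathbb R^3$ a spindle (two cones glued base to base) collapsing onto its axis has limiting Sturm--Liouville weight $h(x)\sim x^2$, and the odd mode gives $D^2\mu_1\to 4\pi^2\approx 39.5>4j_{0,1}^2$; indeed the paper itself uses $\mu_1\le 4\pi^2$ for unit-diameter convex bodies in $\mathbb R^3$ (citing \cite{HM23}), i.e.\ $C_{1,3}=4\pi^2$. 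With the correct constant your bound becomes $\tfrac{\pi}{2}\cdot 4\pi^2=2\pi^3\approx 62.0$, which is \emph{not} beaten by the ball ($P(B)\mu_1(B)=4\pi (p'_{1,1})^2\approx 54.4$) nor by the cube ($6\pi^2\approx 59.2$). So the degenerate case is not excluded and the proof does not close; the failure is structural, because the configurations extremal for $D^2\mu_1$ (segment-like spindles) and those extremal for area at fixed diameter (disks) are incompatible, so the product of the two sharp bounds is far from sharp.

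The paper avoids this by a joint estimate on the collapsing sequence: writing $\Omega_\epsilon$ as the region between $\pm\epsilon h_\epsilon^{\pm}$ over a planar convex set, it tests the Rayleigh quotient with functions of $(x,y)$ only, centers so that $\int h_\epsilon x=\int h_\epsilon y=0$, and uses the coordinate functions to get $\mu_1(\Omega_\epsilon)\le 2\int h_\epsilon\big/\int h_\epsilon(x^2+y^2)$. Combined with $\limsup P(\Omega_\epsilon)\le 2\mathcal H^2(S_h)$ this reduces the collapse case to minimizing the weighted moment of inertia $\int h(x^2+y^2)/\int h$ over nonnegative concave profiles supported in the unit disk; a rearrangement plus an optimality-condition argument shows the minimizer is the cone profile, yielding $\limsup P\mu_1\le \tfrac{40}{3}\pi\approx 41.9$, which the cube's $6\pi^2$ comfortably exceeds. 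To repair your proof you would need either this finer joint analysis or a convex body with $P\mu_1>2\pi^3$, which neither the ball nor the cube provides.
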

\begin{proof}
Let $\Omega_\epsilon$ be a minimizing sequence. Thanks to the Blaschke selection theorem we have three possibilities:
\begin{enumerate}
\item the sequence $\bar{\Omega}_{\epsilon}$ converges in the Hausdorff sense to a segment
\item the sequence $\bar{\Omega}_{\epsilon}$ converges in the Hausdorff sense to a convex domain of codimension $1$ (a plane convex domain).
\item $\Omega_\epsilon$ converges in the Hausdorff sense to an open convex set $\Omega$,
\end{enumerate}
We need to exclude the first two possibilities. 
To exclude the first possibility, we assume that $D(\Omega_\epsilon)=1$ and $\Omega_\epsilon$ collapses to the diameter. Then, by
inclusion of $\Omega_\epsilon$ into a cylinder of radius $O(\epsilon)$ it is straightforward to check that 
$P(\Omega_\epsilon)\rightarrow 0$ and from \cite{HM23} we have $\mu_1(\Omega_\epsilon)\leq 4\pi^2$ and so $\limsup P(\Omega_\epsilon)\mu_1(\Omega_\epsilon)=0$. Therefore the first eventuality is excluded. 

We want to exclude the second eventuality. Consider the minimizing sequence $\Omega_\epsilon$ such that $D(\Omega_\epsilon)=1$ and $\bar{\Omega}_\epsilon\rightarrow \bar{\Omega}$, without loss of generality we can assume that $\bar{\Omega}$ is included in the plane $\{ z=0\}$. Up to translation, we can parameterize the sequence of collapsing domains in the following way:
\begin{equation*}
\Omega_\epsilon=\{(x,y,z)\in \mathbb{R}^3 \;|\,\, (x,y)\in \Omega_{\epsilon}\cap \{z=0\}, \,\, -\epsilon h_{\epsilon}^-(x,y)\leq z\leq \epsilon h_{\epsilon}^+(x,y) \},
\end{equation*}
where $h_{\epsilon}^+$ and $h_{\epsilon}^-$ are two positive concave functions with supports equal to $\Omega_{\epsilon}\cap \{z=0\}$ and we define $h_\epsilon=h_{\epsilon}^++h_{\epsilon}^-$ to be a concave function with $\|h_\epsilon\|_\infty=1$. 

Let us denote by $S_f$ the support of a function $f$. By using test functions that depends only on the first two coordinates in the variational characterization we obtain:
\begin{equation*}
P(\Omega_{\epsilon})\mu_1(\Omega_\epsilon)\leq P(\Omega_{\epsilon})\mu_1(h_\epsilon),
\end{equation*}
where, for a function $h$ depending on the two variables $x,y$:
\begin{equation}\label{d:mu1h}
\mu_1(h):= \inf \Big \{\frac{\int_{S_h}h|\nabla u|^2}{\int_{S_h}h u^2}\,\,\Big | u\in H^1(S_h), \int_{S_h} h u=0\Big \}.
\end{equation}
As the Neumann eigenvalues are translational invariant, for every $\epsilon$ we can choose an origin in the plane $(x,y,0)$ such that 
$\int_{S_{h_\epsilon}} h_\epsilon x=\int_{S_{h_\epsilon}} h_\epsilon y=0$. Using the coordinate functions in the definition \eqref{d:mu1h} we get:
\begin{equation}\label{e:Pmu3}
P(\Omega_{\epsilon})\mu_1(\Omega_\epsilon)\leq 2P(\Omega_{\epsilon})\frac{\int_{S_{h_\epsilon}} h_\epsilon}{\int_{S_{h_\epsilon}}h_\epsilon(x^2+y^2)}.
\end{equation}
 
From the assumption that we have on the diameter we have that $h_\epsilon=0$ on $\partial \Omega_\epsilon \cap \{z=0\}$. We consider now an extension of the function we just defined: let  $\bar{h}_\epsilon=h_\epsilon$ be the function $h_\epsilon$ extended by zero outside the support (recall that $h_\epsilon=0$ on the boundary of its support)  now we can extract a sub sequence such that $\bar{h}_\epsilon\rightarrow \bar{h}$ (in the $L^\infty weak-*$ sense), 
where $\bar{h}$ is the extension of a function $h$ such that $h$ is non negative on its support and is zero on the boundary of its support. It is straightforward to check that $\limsup P(\Omega_\epsilon)\leq 2 \mathcal{H}^2(supp(h))$, where $\mathcal{H}^2$ is the two dimensional Hausdorff measure. From \eqref{e:Pmu3} and from the fact that $h_\epsilon\rightarrow h$ in $L^2(\mathbb{R}^2)$ we obtain 
\begin{equation}\label{e:limsupPM}
\limsup P(\Omega_{\epsilon})\mu_1(\Omega_\epsilon) \leq 4 \mathcal{H}^2(S_h) \frac{\int_{S_h} h}{\int_{S_h}h(x^2+y^2)}.
\end{equation}
In order to obtain an upper bound in \eqref{e:limsupPM} we want to solve the following problem:
\begin{equation}\label{e:momentInertia}
\min \Big \{\frac{\int_{S_h}h(x^2+y^2)}{\int_{S_h}h} \Big | ||h||_{L^\infty(\mathbb{R}^2)}=1, \,\, h \text{ non negative, concave } S_h\subset B_1(0)  \Big \}.
\end{equation}
Where $B_1(0)$ the ball of radius $1$ in the plane $\mathbb{R}^2$ centered at the origin. We stress the fact that $||h||_{L^\infty(\mathbb{R}^2)}=1$ is not a constraint because the functional is invariant under multiplication of $h$ by a constant and the fact that $supp(h)\subset B_1(0)$ came from the choice 
of the minimizing sequence that satisfies $D(\Omega_\epsilon)=1$.

We want to pass from \eqref{e:momentInertia} to a one dimensional problem. We denote by $f^*$ and $f_*$ respectively the spherical decreasing rearrangement and the  spherical increasing rearrangement of the function $f$. We use the classical inequality, see e.g. \cite{Kawohl} $\int fg\geq \int f^*g_*$, noticing that $(x^2+y^2)_*=(x^2+y^2)$ we finally obtain:
\begin{equation*}
\int_{S_h}h(x^2+y^2)\geq \int_{S_{h^*}}h^*(x^2+y^2),
\end{equation*}
where $h^*$  is a radial non negative function such that $h(0)=1$ and $S_{h^*}$ is a ball centered in $(0,0)$ with the same measure as $S_h$. Moreover $h$ is a positive concave function, which implies that  $\{(x,y,z)\in \mathbb{R}^3|(x,y)\in S_h\, , 0\leq z\leq h(x,y)\}$ is convex. After the rearrangement we also have that   $\{(x,y,z)\in \mathbb{R}^3|(x,y)\in S_{h^*}\, , 0\leq z\leq h^*(x,y)\}$ is a convex set so $h^*$ is also a concave function. For more details regarding symmetrization of convex bodies see \cite[p. 77-78]{bonnesen-fenchel}. 
Therefore, after this rearrangement argument, we are led to solve the following one dimensional optimization problem:
\begin{equation}\label{e:momentInertia1}
\min \Big \{ \frac{\int_0^R h(r)r^3dr}{\int_0^R h(r)rdr} \Big | h(0)=1,\, h \text{ concave} , h \text{ decreasing}, h\geq 0, \,  \Big \}.
\end{equation}
We want to show now that the solution of the problem \eqref{e:momentInertia1} is given by a linear function $\bar{h}$.
First of all, existence of a minimizer $\hat{h}$ is straightforward. Note that, by homogeneity of the functional, we can replace the constraint
$h(0)=1$ by an integral constraint like $\int_0^R h(r)r dr=1$.

 Now we want to write the optimality condition.
For that purpose, we use the abstract formalism developed in \cite{LN10}. The concavity constraint is expressed by a Lagrange multiplier
that is here a function $\xi \in H^1(0,1)$ such that $\xi\geq 0$ and $\xi=0$ on the support of the measure $({\hat{h}}'')$. Moreover 
the constraint $h$ decreasing is equivalent in that context to $h'(0)\leq 0$ and $h\geq 0$ is equivalent to $h(R)\geq 0$.
Therefore, there are also two measures $\mu_0$ with support at $r=0$ and $\mu_R$ with support at $r=R$ such that the optimality
condition writes
\begin{equation}\label{e:optimalityxi}
-\xi''=r^3 + a r +\mu_0 + \mu_1
\end{equation}
the term $a r$ coming from the linear constraint $\int_0^R h(r)r dr=1$.
Let us restrict to the open interval $(0,R)$ (where the measures $\mu_0$ and $\mu_R$ disappear), we have the ODE
\begin{equation}\label{ODEoptim}
-\xi''=r^3 + a r \,,\,\, r\in (0,1) .
\end{equation}
Let us denote by $S$ the support of the measure ${\hat{h}}''$. Our aim is to prove that $ S\cap (0,1)$ is empty which will show that ${\hat{h}}$ is linear on $(0,1)$.

The first step is to prove that $S$ does not contains any interval. Suppose that $(\alpha, \beta)\subset S$ from \eqref{ODEoptim} and the definition of
$\xi$, we obtain that $r^3+a r=0$ for all $r\in (\alpha, \beta)$ and this is a contradiction. 

Now let $\alpha$, $\beta \in [0,1]$ such that $(\alpha,\beta)$ is a maximal interval in the open set $S^c$. According to \eqref{ODEoptim} we have that :
\begin{equation}\label{e:odexi}
\begin{cases}
-\xi''(r)=r^3+ar \,\, \text{ in }(\alpha,\beta),\\
\xi(\alpha)=\xi(\beta)=0.
\end{cases}
\end{equation}
Now from the ODE, we see that $\xi$ is in $\mathcal{C}^1$ and since $\xi$ must remain nonnegative, we conclude that also $\xi'(\alpha)=\xi'(\beta)=0$.   
From \eqref{e:odexi} we conclude that $\int_\alpha^\beta r^3+ar=\int_\alpha^\beta r(r^3+ar)=0$, and so $r^3+ar$ should have at least two zeros 
inside the interval $(\alpha,\beta)$ this is impossible. We conclude that $ S^c\cap (0,1)$ does not contain an interior interval. Therefore, the only
possibilities is that $ S\cap (0,1)$ has zero or one point. 

Let us exclude this last case. If $ S\cap (0,1)=x_0$ it means that ${\hat{h}}$ is piecewise affine with a change of slope at $x_0$. If we denote by
$b={\hat{h}}(x_0)$ we see that both $\int_0^R {\hat{h}}(r)r^3 dr$ and $\int_0^R {\hat{h}}(r)rdr$ are affine functions of $b$. 
Therefore, the functional we want
to minimize is homographic in $b$. Therefore it is 
\begin{itemize}
    \item either strictly monotone and we can improve the value of the functional by moving vertically the point $(x_0,b)$ showing that it is not an optimum
    \item or the function is constant in $b$ and we can move $b$ down to the position where ${\hat{h}}$ becomes linear without changing the value of the functional
\end{itemize}
Therefore, in any case we have proved that the minimizer is a linear function, say ${\hat{h}}(r)=c-\frac{c-d}{R} r$. Now a straightforward calculation gives
\begin{equation}
  \int_0^R h(r)r^3 dr = \frac{R^4}{5} \left(d+\frac{c}{4}\right) \quad \int_0^R h(r)r dr = \frac{R^2}{3} \left(d+\frac{c}{2}\right) .
\end{equation}
Now the ratio is clearly minimized when taking $d=0$ what yields
\begin{equation*}
\frac{\int_0^R {\hat{h}}(r) r^3dr}{\int_0^R {\hat{h}}(r) rdr}=\frac{3R^2}{10},
\end{equation*}
combining this with \eqref{e:limsupPM} and the area being given by $\pi R^2$, we finally obtain:
\begin{equation}
\limsup P(\Omega_{\epsilon})\mu_1(\Omega_\epsilon) \leq \frac{40}{3}\pi.
\end{equation}
Consider now the unit cube $[0,1]^3$ we have that $P([0,1]^3)\mu_1([0,1]^3)=6\pi^2 >\frac{40}{3}\pi$, this concludes the proof.
\end{proof}
\begin{remark}
    We do not know whether the cube is the maximizer of $P(\Omega)\mu_1(\Omega)$ in dimension 3. By analogy with the two-dimensional case
    (where the square and the equilateral triangle are conjectured to be the maximizers of $P^2\mu_1$, see \cite{HLL}), this is a reasonable conjecture.
\end{remark}

\begin{theorem}\label{thNonExInfPd}
Let $d\geq 3$ then there are no solutions of the following minimization problem: 
\begin{equation*}
\inf \{P^{\frac{2}{d-1}}(\Omega)\mu_k(\Omega),\, \Omega \subset \mathbb{R}^d, \text{ bounded, open and convex } \}    
\end{equation*}
\end{theorem}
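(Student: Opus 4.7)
The plan is to exhibit an explicit sequence of convex bodies in $\RR^d$ (with $d\geq 3$) along which $P^{2/(d-1)}\mu_k$ tends to zero, and then observe that any bounded open convex $\Om$ satisfies $P(\Om)>0$ and $\mu_k(\Om)>0$. These two facts together force the infimum to equal $0$ and to be unattained.

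For the construction, I would take the thin box $\Om_\epsilon := (0,1)\times(0,\epsilon)^{d-1}$ for $\epsilon\to 0^+$. Separation of variables yields the Neumann spectrum of $\Om_\epsilon$ as the set of numbers
\[
\pi^2 n_1^2 + \pi^2\epsilon^{-2}(n_2^2+\cdots+n_d^2), \qquad n_1,\dots,n_d \in \{0,1,2,\dots\}.
\]
Hence, for $\epsilon$ small enough (depending on $k$), the $k$-th eigenvalue is obtained by taking $n_2=\cdots=n_d=0$ and $n_1=k$, giving $\mu_k(\Om_\epsilon)=(k\pi)^2$. The surface area is $P(\Om_\epsilon)=2\epsilon^{d-1}+2(d-1)\epsilon^{d-2}$, which tends to $0$ as $\epsilon\to 0$ since $d\geq 3$. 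Therefore
\[
P(\Om_\epsilon)^{2/(d-1)}\mu_k(\Om_\epsilon) \;\leq\; \bigl(2d\,\epsilon^{d-2}\bigr)^{2/(d-1)}(k\pi)^2 \;\longrightarrow\; 0,
\]
because the exponent $2(d-2)/(d-1)$ is strictly positive for $d\geq 3$. Since $P(\Om)\mu_k(\Om)>0$ for every bounded open convex $\Om\subset\RR^d$, the value $0$ cannot be attained, which is the desired non-existence.

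The contrast with the planar Theorem \ref{thExInfP} is instructive: in $\RR^2$ a thin rectangle has perimeter bounded below by twice its length, so $P^2\mu_k$ stays away from zero along any collapsing sequence; in $\RR^d$ with $d\geq 3$ a thin box elongated in a single direction has surface area of order $\epsilon^{d-2}\to 0$, which is exactly what drives the functional to zero. There is no serious obstacle in the argument; it is a direct explicit construction, and the only point to verify carefully is that for fixed $k$ the $k$-th eigenvalue of $\Om_\epsilon$ really equals $(k\pi)^2$ once $\epsilon$ is small, which is immediate from the explicit spectrum.
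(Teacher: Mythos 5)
Your proof is correct and follows essentially the same route as the paper: both exhibit a cuboid collapsing in $d-1$ directions (the paper uses $(0,1/n)^{d-1}\times(0,1)$), note that its surface area tends to zero while $\mu_k$ stays bounded, and conclude that the infimum is $0$ and hence unattained. The only cosmetic difference is that you compute $\mu_k(\Om_\epsilon)=(k\pi)^2$ explicitly by separation of variables, whereas the paper simply invokes the uniform bound on $\mu_k$ for convex sets of bounded diameter.
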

\begin{proof}
We can easily exhibit a minimizing sequence of convex domains $\Omega_n$ such that $P^{\frac{2}{d-1}}(\Omega_n)\mu_k(\Omega_n)$ goes to zero.
Indeed, take for example a cuboid $(0,1/n)^{d-1} \times (0,1)$: its perimeter goes to zero with $n$ while its $k$-th eigenvalue
$\mu_k(\Omega_n)$ is uniformly bounded, since its diameter is bounded
  (see for instance \cite{HM23}). 
\end{proof}

\section{analysis of the disk}\label{s:ball}
In this section we consider the case of the disk and we study the optimality conditions around the disk. We start by studying the optimization problem under diameter constraint.
\begin{theorem}\label{t:ballmultipleD}
Let $B\subset \mathbb{R}^2$ be the unit disk and let $k\in \mathbb{N}$ be an index such that $\mu_{k-1}(B)=\mu_k(B)$ then $B$ is never a local minimizer for the problem:
\begin{equation*}
\inf \{D(\Omega)^2\mu_{k-1}(\Omega),\, \Omega \subset \mathbb{R}^2, \text{ bounded, open and convex } \}   
\end{equation*}
\end{theorem}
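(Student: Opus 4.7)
The plan is to apply the Hadamard shape derivative formula for multiple eigenvalues and exhibit a smooth boundary perturbation of $B$ along which the scale-invariant functional $D^2\mu_{k-1}$ strictly decreases at first order. Write $\mu:=\mu_{k-1}(B)=\mu_k(B)=(j'_{m,n})^2$ for some integers $m\ge 1$ and $n\ge 1$. The associated two-dimensional eigenspace is spanned by the $L^2$-normalized Bessel modes $u_c \propto J_m(j'_{m,n}r)\cos(m\theta)$ and $u_s \propto J_m(j'_{m,n}r)\sin(m\theta)$.

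For a normal perturbation $V\cdot n=\phi(\theta)$ of $\partial B$, the directional derivative of the lower branch $\mu_{k-1}$ equals the smallest eigenvalue $\lambda_-(\phi)$ of the Hadamard matrix $\tilde M_{ij}(\phi)=\int_{\partial B}(\nabla u_i\cdot\nabla u_j-\mu u_iu_j)\phi\,d\sigma$. Using $J'_m(j'_{m,n})=0$ on $\partial B$ together with the explicit $L^2$ normalization of the Bessel eigenfunctions, a direct computation yields the clean formula
\[
\lambda_\pm(\phi)=-2\mu I_0(\phi)\pm\frac{\mu(m^2+\mu)}{\mu-m^2}\,A(\phi),
\]
where $I_0(\phi):=\frac{1}{2\pi}\int_0^{2\pi}\phi\,d\theta$ is the mean of $\phi$ and $A(\phi):=\sqrt{\hat a_m^2+\hat b_m^2}$ is the modulus of its Fourier coefficients at frequency $2m$. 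Combined with the first-order diameter expansion $D(\Omega_\epsilon)=2+\epsilon\,D'(\phi)+o(\epsilon)$ with $D'(\phi):=\max_\theta(\phi(\theta)+\phi(\theta+\pi))$, the first-order change of $D^2\mu_{k-1}$ in direction $\phi$ is $4\epsilon\bigl(\mu\,D'(\phi)+\lambda_-(\phi)\bigr)+o(\epsilon)$.

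The crux is to find a $\pi$-periodic $\phi$ with $I_0(\phi)=0$, $A(\phi)=1$, and $\max\phi<\tfrac{m^2+\mu}{2(\mu-m^2)}$: this strict inequality forces $\mu D'(\phi)+\lambda_-(\phi)<0$. A classical linear programming duality (equivalently, a Chebyshev-type extremal argument for trigonometric polynomials) shows that the infimum of $\max\phi$ over $\pi$-periodic zero-mean functions with prescribed unit Fourier coefficient at $\cos(2m\theta)$ is $\tfrac12$, approached by ``bang-bang'' configurations equal to $\tfrac12$ outside narrow intervals of large negative values centered at the $2m$ angles where $\cos(2m\theta)=-1$. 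Since
\[
\frac{m^2+\mu}{2(\mu-m^2)}-\frac12=\frac{m^2}{\mu-m^2}>0
\]
for every $m\ge 1$, a sufficiently fine smooth mollification $\phi_\eta$ of this extremal configuration satisfies the strict inequality $\max\phi_\eta<\tfrac{m^2+\mu}{2(\mu-m^2)}$.

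Since $\phi_\eta$ is then $C^\infty$ with controlled second derivatives, the polar domain $\Omega_\epsilon=\{r\le 1+\epsilon\phi_\eta(\theta)\}$ is strictly convex for $\epsilon>0$ small enough, hence admissible in the optimization problem, and the first-order expansion gives $D(\Omega_\epsilon)^2\mu_{k-1}(\Omega_\epsilon)<4\mu=D(B)^2\mu_{k-1}(B)$, so $B$ cannot be a local minimizer. The main obstacle is this third step: the LP infimum $\max\phi=\tfrac12$ is attained only in the singular limit of measures with atomic parts, so one must carefully balance the mollification scale against the gap $m^2/(\mu-m^2)$ in order to preserve both the strict trigonometric inequality and the $C^2$ regularity needed to guarantee convexity of $\Omega_\epsilon$ uniformly for small $\epsilon$. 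A secondary, purely computational check is the explicit derivation of $\lambda_\pm(\phi)$, which requires a careful bookkeeping of the Bessel $L^2$-normalization $N_{m,n}=\tfrac{\pi(\mu-m^2)}{2\mu}J_m(j'_{m,n})^2$.
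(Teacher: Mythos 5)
Your proposal follows essentially the same route as the paper's proof: the first-order Hadamard matrix for the double eigenvalue, the explicit Bessel computation on $\partial B$ giving $\lambda_-=-2\mu\,\alpha_0-\frac{\mu(m^2+\mu)}{\mu-m^2}\sqrt{\alpha_{2m}^2+\beta_{2m}^2}$, the diameter expansion $D(\Omega_\epsilon)=2+\epsilon\max_\theta(f(\theta)+f(\theta+\pi))$, and a near-extremal trigonometric perturbation whose maximum undercuts the threshold thanks to the gap $\frac{m^2+\mu}{2(\mu-m^2)}-\frac12=\frac{m^2}{\mu-m^2}>0$. The only differences are presentational: the paper realizes the near-extremal profile via a $\pi/m$-periodic piecewise-affine correction added to $\cos(2m\theta)$, whereas you mollify a bang-bang configuration and are somewhat more explicit both about the extremal value $1/2$ of the trigonometric problem and about the regularity needed to keep $\Omega_\epsilon$ convex.
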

\begin{proof}
To simplify the notation we introduce $\mu_{k-1}(B)=\mu_k(B)=\omega_0^2={j'}_{m,l}^2$ that is the square of a zero of the derivative of the Bessel function $J_m$. 
We construct $\Omega_\epsilon$ a convex perturbation of the unit disk by perturbing the support function of the unit disk. The support function of $\Omega_\epsilon$ is given by $h_\epsilon(\theta)=1+\epsilon f(\theta)$, consequently the distance from the origin to a point in $\partial \Omega_\epsilon$ is given by $r_\epsilon=1+\epsilon f(\theta)+o(\epsilon)$. We want to find a first order expansion of the eigenvalue $\mu_{k-1}(\Omega_{\epsilon})$ and prove that for a particular choice of $f$ we have  
$\mu_{k-1}(\Omega_{\epsilon})<\mu_{k-1}(B)$.  
We introduce $u_1$ and $u_2$ the eigenfunctions corresponding to $\mu_{k-1}(B)$ and $\mu_k(B)$:
\begin{align*}
u_1(r,\theta)&=A J_m(j_{m,l}'r)\cos(m\theta)\\
u_2(r,\theta)&=A J_m(j_{m,l}'r)\sin(m\theta),
\end{align*}
where $J_m$ is the Bessel function of index $m$, $j_{m,l}'$ is the $l$th-zero of the function  $J_m'$ and $A$ is a normalizing constant such that $||u_i||_{L^2(B)}=1$ with $i=1,2$. For multiple eigenvalues, we know that, see e.g. \cite[Chapter 2]{H06}, $\mu_{k-1}(\Omega_{\epsilon})=\mu_{k-1}(B)+\epsilon\lambda_1+o(\epsilon)$ where $\lambda_1$ is the smallest eigenvalue of the following matrix
\begin{equation*}
\mathcal{M}=\begin{pmatrix}\vspace{0.5cm}
\int_{\partial B}(|\nabla u_1|^2-\omega_0^2u_1^2)fds & \int_{\partial B}(\nabla u_1\cdot\nabla u_2 -\omega_0^2u_1u_2)fds \\ 
\int_{\partial B}(\nabla u_1\cdot\nabla u_2 -\omega_0^2u_1u_2)fds & \int_{\partial B}(|\nabla u_2|^2-\omega_0^2u_2^2)fds
\end{pmatrix}
\end{equation*}
We expand the function $f$ in Fourier series $f(\theta)=\sum_{p=0}^{\infty}\alpha_p\cos(p\theta)+\beta_p\sin(p\theta)$ and using the explicit expression of $u_1$ and $u_2$ we obtain:
\begin{equation*}
\mathcal{M}=A^2 \pi J_m(\omega_0)\begin{pmatrix}\vspace{0.5cm}
(m^2-\omega_0^2)\alpha_0-\frac{m^2+\omega_0^2}{2}\alpha_{2m} & -\frac{m^2+\omega_0^2}{2}\beta_{2m} \\ 
-\frac{m^2+\omega_0^2}{2}\beta_{2m} & (m^2-\omega_0^2)\alpha_0+\frac{m^2+\omega_0^2}{2}\alpha_{2m}
\end{pmatrix}
\end{equation*}
the smallest eigenvalue of $\mathcal{M}$ is given by:
\begin{equation}\label{e:firstorderN}
\lambda_1=-\omega_0^2 \left (2\alpha_0+\frac{m^2+\omega_0^2}{|m^2-\omega_0^2|}\sqrt{\alpha_{2m}^2+\beta_{2m}^2} \right).
\end{equation}
On the other hand, the diameter satisfies $D(\Omega_\epsilon)=2+\epsilon M_f$ where $M_f=\sup_{\theta\in [0,\pi]}(f(\theta)+f(\theta+\pi))$, using \eqref{e:firstorderN} we finally obtain:
\begin{equation}\label{e:firstorderDN}
D(\Omega_{\epsilon})^2\mu_{k-1}(\Omega_\epsilon)=4\omega_0^2+\epsilon 4 (M_f\omega_0^2+\lambda_1)+o(\epsilon).
\end{equation}
From \eqref{e:firstorderN} and  \eqref{e:firstorderDN} we can conclude if we can find a function $f(\theta)$ such that:
\begin{equation*}
M_f-2\alpha_0-\frac{m^2+\omega_0^2}{|m^2-\omega_0^2|}\sqrt{\alpha_{2m}^2+\beta_{2m}^2}<0.
\end{equation*}
We consider $f(\theta)=\alpha_0+\cos(2m\theta)+\phi(\theta)$, where $\phi$ is a function satisfying
\begin{itemize}
    \item $\int_0^{2\pi}\phi(\theta)\cos(2m\theta)d\theta =0$,
    \item $M_f<2\alpha_0+1+\eta$, where $\eta$ is chosen later.
\end{itemize} 
It is straightforward to check that one can choose a function $\phi$ that is $\pi/m$ periodic
and piece-wise affine that will be convenient.
With this precise choice of $f$ we obtain:
\begin{equation*}
M_f-2\alpha_0-\frac{m^2+\omega_0^2}{|m^2-\omega_0^2|}\sqrt{\alpha_{2m}^2+\beta_{2m}^2}<1+\eta-\frac{m^2+\omega_0^2}{|m^2-\omega_0^2|}<0,
\end{equation*}
where we choose $\eta<\frac{m^2+\omega_0^2}{|m^2-\omega_0^2|}-1$. 
\end{proof}

\begin{theorem}\label{t:ballsingleD}
Let $B\subset \mathbb{R}^2$ be the unit disk and let $k\in \mathbb{N}$ be an index such that $\mu_k(B)$ is a simple eigenvalue, 
then $B$ is never a local minimizer for the problem:
\begin{equation*}
\inf \{D(\Omega)^2\mu_k(\Omega),\, \Omega \subset \mathbb{R}^2, \text{ bounded, open and convex } \}   
\end{equation*}
\end{theorem}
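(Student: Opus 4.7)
The plan is to adapt the strategy of Theorem~\ref{t:ballmultipleD}, but since the eigenvalue is simple the matrix used there degenerates to a scalar and a first-order descent direction no longer exists; a second-order analysis is needed. Write $\mu_k(B) = \omega_0^2 = (j'_{0,l})^2$ with radial eigenfunction $u_0(r) = A J_0(\omega_0 r)$, normalized by $\pi A^2 J_0(\omega_0)^2 = 1$; the Neumann condition reads $J_0'(\omega_0) = 0$, so $\nabla u_0 \equiv 0$ on $\partial B$. Perturbing the disk via the support function $h_\epsilon(\theta) = 1+\epsilon f(\theta)$ (with $|\epsilon|$ small enough that $h_\epsilon+h_\epsilon''>0$) and expanding $f(\theta) = \alpha_0 + \sum_{p\geq 1}(\alpha_p\cos(p\theta)+\beta_p\sin(p\theta))$, the Hadamard formula yields
$$
\mu_k(\Omega_\epsilon) = \omega_0^2 - 2\omega_0^2 \alpha_0\, \epsilon + O(\epsilon^2),
$$
while $D(\Omega_\epsilon) = 2 + \epsilon M_f$ with $M_f = \sup_\theta(f(\theta)+f(\theta+\pi)) \geq 2\alpha_0$. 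Consequently the first-order variation of $D^2\mu_k$ equals $4\omega_0^2(M_f - 2\alpha_0)\epsilon \geq 0$ in every direction: no first-order descent is available, in sharp contrast with Theorem~\ref{t:ballmultipleD} where the term $\frac{m^2+\omega_0^2}{|m^2-\omega_0^2|}\sqrt{\alpha_{2m}^2+\beta_{2m}^2}$ appearing in~\eqref{e:firstorderN} produced one.

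The next step is therefore to seek a perturbation with vanishing first-order variation but strictly negative second-order variation. The natural choice is to take $f$ whose Fourier expansion contains only odd frequencies, so that $f(\theta)+f(\theta+\pi)\equiv 0$, hence $\alpha_0=0$, $M_f=0$, and $D(\Omega_\epsilon) \equiv 2$ exactly via the support function formula for the diameter of a convex body. Then $D^2(\Omega_\epsilon)\mu_k(\Omega_\epsilon) = 4\mu_k(\Omega_\epsilon)$, and it suffices to exhibit such an $f$ (nontrivial modulo translations, i.e.\ with a nonzero coefficient at some odd frequency $p\geq 3$) for which $\mu_k(\Omega_\epsilon) < \omega_0^2$ for small $\epsilon\neq 0$.

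Writing $\mu_k(\Omega_\epsilon) = \omega_0^2 + \epsilon^2\mu_2 + o(\epsilon^2)$, the coefficient $\mu_2$ is expressed via the first-order eigenfunction perturbation $w$ on $B$, which satisfies $-\Delta w - \omega_0^2 w = 0$ in $B$ with boundary datum $\partial_n w = \omega_0^2 A J_0(\omega_0)\, f$ on $\partial B$. This datum is obtained by pulling back $\partial_{n_\epsilon} u_\epsilon = 0$ on $\partial\Omega_\epsilon$ and using $u_0'(1) = 0$ together with $u_0''(1) = -\omega_0^2 A J_0(\omega_0)$ from the radial Bessel ODE. For the single-mode choice $f(\theta) = \cos(p\theta)$ with odd $p\geq 3$, separation of variables yields
$$
w(r,\theta) = \frac{\omega_0 A J_0(\omega_0)}{J_p'(\omega_0)}\, J_p(\omega_0 r)\cos(p\theta),
$$
which is well-defined provided $J_p'(\omega_0)\neq 0$, a generic condition on the Bessel lacunae. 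The standard second-order shape derivative formula for a simple Neumann eigenvalue then reduces $\mu_2$ to an explicit expression in $J_0(\omega_0)$, $J_p(\omega_0)$, $J_p'(\omega_0)$, $\omega_0$ and $p$.

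The main obstacle is to verify $\mu_2 < 0$ for at least one admissible $f$. Once the shape derivative is assembled, the bulk terms partially cancel by the equation satisfied by $w$, leaving only boundary integrals involving $f\,\partial_n w$ together with curvature corrections of the form $\int_{\partial B} f^2 H(|\nabla u_0|^2 - \omega_0^2 u_0^2)\,ds$; the sign of $\mu_2$ then reduces to an inequality between Bessel quantities evaluated at $\omega_0 = j'_{0,l}$. I would verify this inequality by direct manipulation of standard Bessel identities (in particular, the recurrence $J_p'(x) = J_{p-1}(x) - pJ_p(x)/x$) and, failing a fully closed-form argument, by a numerical check for each simple eigenvalue — the set of simple eigenvalues of the disk being countable and the inequality scale-invariant, possibly after superposing several odd modes. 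Strict negativity of $\mu_2$ then gives $D^2(\Omega_\epsilon)\mu_k(\Omega_\epsilon) = 4\omega_0^2 + 4\epsilon^2\mu_2 + o(\epsilon^2) < 4\omega_0^2$ for small nonzero $\epsilon$, proving that $B$ is not a local minimizer.
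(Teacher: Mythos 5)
Your overall strategy coincides with the paper's: observe that the first-order variation of $D^2\mu_k$ is nonnegative, restrict to perturbations of the support function with only odd Fourier frequencies so that the width (hence the diameter) is unchanged, and then decide the sign of the second-order coefficient of $\mu_k(\Omega_\epsilon)$. Your setup of the first-order corrector $w$, including the boundary datum $\partial_n w=\omega_0^2 A J_0(\omega_0)f$ and the explicit solution $w=\frac{\omega_0 AJ_0(\omega_0)}{J_p'(\omega_0)}J_p(\omega_0 r)\cos(p\theta)$, is consistent with what the paper obtains by expanding $u_\epsilon$ in the Fourier--Bessel basis (their coefficients \eqref{e:b2k+1}--\eqref{e:barb2k+1}).

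However, the decisive step is missing. The entire content of the theorem is the verification that the second-order coefficient is strictly negative for some admissible odd-frequency perturbation, and you leave this as ``an inequality between Bessel quantities'' to be checked by Bessel identities ``and, failing that, by a numerical check for each simple eigenvalue''. The fallback is not viable: the simple eigenvalues of the disk form the infinite family $({j'}_{0,l})^2$, $l\ge 2$, so a case-by-case numerical verification cannot close the proof; and without writing down the second-order formula you cannot even state the inequality to be proved. Your sketch of that formula is also too loose: on $\partial B$ one has $\nabla u_0=0$, so the ``curvature correction'' you mention collapses to $-\omega_0^2\int_{\partial B}f^2u_0^2\,ds$, but there are further contributions from the second normal derivative of $u_0$ and from the tangential displacement of the boundary whose coefficients must be tracked --- this bookkeeping is exactly where the work lies. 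The paper carries it out: for $f$ with only odd modes it derives $\omega_2=\alpha_0^2\omega_0+\sum_k c_k(\alpha_{2k+1}^2+\beta_{2k+1}^2)$ with $c_k=\omega_0(k^2+k)-\frac{\omega_0^2J_{2k+1}(\omega_0)}{2J'_{2k+1}(\omega_0)}$, chooses the single mode $p=3$, and shows that $c_1<0$ is equivalent to $\frac{J_3'(\omega_0)}{J_3(\omega_0)}<\frac{\omega_0}{4}$, which via $\omega_0J_3'(\omega_0)=3J_3(\omega_0)-\omega_0J_2(\omega_0)$ and $J_3(\omega_0)=4J_2(\omega_0)\omega_0^{-1}$ (using $J_1(\omega_0)=-J_0'(\omega_0)=0$) reduces to $\omega_0^2>6$, valid for every $l\ge2$. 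Until you produce the explicit coefficient and a uniform-in-$l$ sign argument of this kind, the proof is not complete.
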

\begin{proof}
In the previous theorem, first order optimality condition was enough to conclude to the non-minimality of the disk.
For a simple eigenvalue, it turns out that the first order derivative is non-negative and we need to work with deformation for which
this first derivative is zero and, then look at the second order derivative in order to conclude.
Therefore, we proceed in a different way with respect the proof of Theorem \ref{t:ballmultipleD}. Indeed we will not use a shape derivative approach, but we will expand the normal derivative of the perturbed eigenfunction on the boundary. As in the proof of Theorem \ref{t:ballmultipleD} we perturb the disk $B$ by perturbing the support function. Let $\Omega_\epsilon$ be a domain with support function given by $h_\epsilon(t)=1+\epsilon f(t)$. 
From the formulae giving the parameterization of the boundary: $x(t)=h_\epsilon(t) \cos t - h'_\epsilon(t) \sin t; y(t)=h_\epsilon(t) \sin t + 
h'_\epsilon(t) \cos t$ we infer 
that the distance from the origin to a point in $\partial \Omega_\epsilon$ is given by
\begin{equation}
r_\epsilon=1+\epsilon f(t)+\frac{\epsilon^2}{2}f'(t)^2+o(\epsilon^2).
\end{equation}
Let $\theta_\epsilon$ be the polar angle (we recall that $t$ be the normal angle) then we have
\begin{equation}
\theta_\epsilon=t+\epsilon f'(t)-\epsilon^2f(t)f'(t). 
\end{equation}
We introduce $\mu_k(\Omega_\epsilon)=\omega_\epsilon^2$, thanks to the fact the eigenvalue is simple we have that $\omega_\epsilon$ admits the following expantion $\omega_\epsilon=\omega_0+\epsilon \omega_1+\epsilon^2\omega_2+o(\epsilon^2)$ where $\omega_0^2=j_{0,l}'^2$ with $l\geq 2$. The aim is to compute $\omega_1$ and $\omega_2$. We write the eigenfunction $u_\epsilon$ of $\mu_k(\Omega_\epsilon)$ as an expansion in the basis given by the eigenfunctions of the disk
\begin{equation}
u_\epsilon(r,\theta)=\sum_{n=0}^\infty A_n(\epsilon)J_n(\omega_\epsilon r_\epsilon)\cos(n\theta_\epsilon)+B_n(\epsilon)J_n(\omega_\epsilon r_\epsilon)\sin(n\theta_\epsilon)
\end{equation} 
where 
\begin{align*}
A_n(\epsilon)&=\delta_{n,0}a_n+\epsilon b_n+\epsilon^2c_n\\
B_n(\epsilon)&=\delta_{n,0}\bar{a}_n+\epsilon \bar{b}_n+\epsilon^2\bar{c}_n,
\end{align*}
where $a_n,b_n,c_n,\bar{a}_n,\bar{b}_n$ and $\bar{c}_n$ are real numbers and $\delta_{n,0}$ is a Kronecker delta. We want to impose the relation $\partial_n u_\epsilon|_{\partial \Omega_\epsilon}=0$ and identifying the main term, the term in $\epsilon$ and the term in $\epsilon^2$ finding in this way explicit formulas for $\omega_1$ and $\omega_2$. In order to do this computation we first need the following expansion of the Bessel function $J_0'$ around $\omega_\epsilon r_\epsilon$ we obtain 
\begin{align}\notag
J'_0(\omega_\epsilon r_\epsilon)&=\cancel{J'_0(\omega_0)}+\epsilon J''_0(\omega_0)(\omega_1+\omega_0f(t))+\epsilon^2 G_0(t)+o(\epsilon^2),\\ \label{e:G0}
G_0(t)&:=J''_0(\omega_0)(\omega_2+\omega_1f(t)+\frac{\omega_0}{2}f'(t)^2)+\frac{J'''_0(\omega_0)}{2}(\omega_1+\omega_0f(t))^2.
\end{align} 
As explained before, we realize that the first order expansion is zero if and only if $\alpha_{2k}=\beta_{2k}=0$, we decide to make the following choice of the perturbation:
\begin{equation}\label{e:fexpand}
f(t)=\alpha_0+\sum_{k=1}^\infty\alpha_{2k+1}\cos((2k+1)t)+\beta_{2k+1}\sin((2k+1)t).
\end{equation} 

We now compute an expansion up to $\epsilon^2$ of $\partial_n u_\epsilon|_{\partial \Omega_\epsilon}=0$ and we obtain:
\begin{align} \label{e:bigeq}
0=\sum_{n=0}^\infty&\left[\omega_0+\epsilon \omega_1+\epsilon^2\omega_2\right]\left[J'_n(\omega_0)+\epsilon J''_n(\omega_0)(\omega_1+\omega_0f(t))+\epsilon^2 G_n(t)\right]\\ \notag
&\times\Big[(\delta_{n,0}a_n+\epsilon b_n+\epsilon^2c_n)\left(\cos(nt)-\epsilon nf'(t)\sin(nt)\right)\\ \notag
&+(\delta_{n,0}\bar{a}_n+\epsilon \bar{b}_n+\epsilon^2\bar{c}_n)\left(\sin(nt)+\epsilon nf'(t)\cos(nt)\right) \Big]\\ \notag
&-n\Big[\epsilon f'(t)-2\epsilon^2f(t)f'(t)\Big]\Big[J_n(\omega_0)+\epsilon J'_n(\omega_0)(\omega_1+\omega_0f(t))\Big]\\ \notag
&\times \Big[ -(\delta_{n,0}a_n+\epsilon b_n)\left(\sin(nt)+\epsilon nf'(t)\cos(nt)\right)\\ \notag
&+(\delta_{n,0}\bar{a}_n+\epsilon \bar{b}_n)\left(\cos(nt)-\epsilon nf'(t)\sin(nt)\right) \Big]+o(\epsilon^2).
\end{align}

\textbf{Term in $\epsilon$.} From equation \eqref{e:bigeq}, identifing the term in front of $\epsilon$  we obtain:
\begin{equation}\label{e:termepsilon}
a_0\omega_0J''_0(\omega_0)(\omega_1+\omega_0 f(t))+\omega_0 \sum_{n=1}^\infty J_n'(\omega_0)(b_n\cos(nt)+\bar{b}_n\sin(nt)) =0 \;\;\; \forall t\in [0,2\pi].
\end{equation}
In particular the mean of the above function is zero, using the expansion \eqref{e:fexpand} and identifying the zero term in the expansion we finally obtain:
\begin{equation}\label{e:w1}
\omega_1=-\alpha_0\omega_0.
\end{equation}
Imposing that the coefficients of $\cos((2k+1)t)$ and $\sin((2k+1)t)$ in the Fourier expansion in \eqref{e:termepsilon} are zero and using the fact that $J''_0(\omega_0)=-J_0(\omega_0)$  we obtain:
\begin{align}\label{e:b2k+1}
b_{2k+1}=a_0\omega_0\frac{J_0(\omega_0)}{J'_{2k+1}(\omega_0)}\alpha_{2k+1}\\ \label{e:barb2k+1}
\bar{b}_{2k+1}=a_0\omega_0\frac{J_0(\omega_0)}{J'_{2k+1}(\omega_0)}\beta_{2k+1}
\end{align} 
\textbf{Term in $\epsilon^2$} From equation \eqref{e:bigeq}, identifying the term in front of $\epsilon^2$  we obtain:
\begin{align}\label{e:termepsilon2}
&\underbrace{a_0\omega_0G_0(t)+a_0\omega_1J''_0(\omega_0)(\omega_1+\omega_0f(t))}_{:=I_1}+\\ \notag
&+\underbrace{f'(t)\sum_{n=1}^\infty n \big[\omega_0J_n'(\omega_0)-J_n(\omega_0) \big]\big[\bar{b}_n\cos(nt)-b_n\sin(nt)\big]}_{:=I_2}+\\ \notag
&+\underbrace{f(t)\sum_{n=1}^\infty n \omega_0^2J_n''(\omega_0)\big[b_n\sin(nt)+\bar{b}_n\cos(nt)\big]}_{:=I_3}+I_4=0 \;\;\; \forall t\in [0,2\pi],\\ \notag
\end{align}
where in $I_4$ we collect all the terms in which the dependence in $t$ is given only by $\cos(nt)$ and $\sin(nt)$, in particular $\int_0^{2\pi}I_4=0$. 

We compute $\int_0^{2\pi}I_1$, Using the expansion \eqref{e:fexpand}, equation \eqref{e:w1}, equation \eqref{e:G0}, Parseval identity in order to compute $(2\pi)^{-1}\int_0^{2\pi}f^2$ and $(2\pi)^{-1}\int_0^{2\pi}f'^2$ and using the relations $J''_0(\omega_0)=-J_0(\omega_0)$  and $J'''_0(\omega_0)=\omega_0^{-1}J_0(\omega_0)$ we finally obtain 
\begin{align*}
\int_0^{2\pi}I_1=&a_0\omega_0\big [-J_0(\omega_0)\big (\omega_2-\alpha_0^2\omega_0+\frac{\omega_0}{4}\sum_{k=1}^\infty(2k+1)^2(\alpha_{2k+1}^2+\beta_{2k+1}^2)\big )+\\
&+\frac{J_0(\omega_0)\omega_0}{4}\big( \sum_{k=1}^\infty(\alpha_{2k+1}^2+\beta_{2k+1}^2) \big) \big ].
\end{align*}
We compute $\int_0^{2\pi}I_2$, using the expansion \eqref{e:fexpand} and the relations \eqref{e:b2k+1} and \eqref{e:barb2k+1} we obtain
\begin{align}
\int_0^{2\pi}I_2&=\sum_{k=1}^\infty \frac{(2k+1)^2}{2}\big [\omega_0J'_{2k+1}(\omega_0)-J_{2k+1}(\omega_0) \big] \big [b_{2k+1}\alpha_{2k+1}+\bar{b}_{2k+1}\beta_{2k+1} \big ] \\ \notag
&=\sum_{k=1}^\infty a_0\omega_0J_0(\omega_0) \frac{(2k+1)^2}{2} \big [\omega_0-\frac{J_{2k+1}(\omega_0)}{J_{2k+1}'(\omega_0)} \big] \big [\alpha_{2k+1}^2+\beta_{2k+1}^2 \big ].
\end{align}
Similarly we obtain
\begin{equation}
\int_0^{2\pi}I_3= \sum_{k=1}^\infty a_0\omega_0^3J_0(\omega_0)\frac{J''_{2k+1}(\omega_0)}{2J'_{2k+1}(\omega_0)}[\alpha_{2k+1}^2+\beta_{2k+1}^2 \big ].
\end{equation}
From \eqref{e:termepsilon2} we have that $\int_0^{2\pi}I_1+I_2+I_3=0$, from the above equations and the relation $\omega_0^2J_{2k+1}''(\omega_0)+\omega_0J_{2k+1}'(\omega_0)+(\omega_0^2-(2k+1)^2)J_{2k+1}(\omega_0)=0$ we finally obtain 
\begin{equation}\label{e:w2}
\omega_2=\alpha_0^2\omega_0+\sum_{k=1}^\infty c_k (\alpha_{2k+1}^2+\beta_{2k+1}^2),
\end{equation} 
where 
\begin{equation}\label{e:ck}
c_k:=\omega_0(k^2+k)-\frac{\omega_0^2J_{2k+1}(\omega_0)}{2J_{2k+1}'(\omega_0)}.
\end{equation}
From the perturbation we have chosen, we have $D(\Omega_{\epsilon})=2+2\epsilon\alpha_0$, using \eqref{e:w1} we obtain:
\begin{equation}
D(\Omega_{\epsilon})^2\mu_k(\Omega_\epsilon)=
4\omega_0^2+\epsilon^28(\omega_0\omega_2-\alpha_0^2\omega_0^2)+o(\epsilon^2).  
\end{equation}
From \eqref{e:w2} we conclude the proof if we are able to find a particular perturbation $f$ such that
\begin{equation}
\omega_0\omega_2-\alpha_0^2\omega_0^2=\omega_0\sum_{k=1}^\infty c_k (\alpha_{2k+1}^2+\beta_{2k+1}^2)<0.
\end{equation} 
We choose a perturbation such that $\alpha_3=\beta_3=1$, and all the others Fourier coefficient equal to zero. From \eqref{e:ck} we need to prove that 
\begin{equation}\label{e:ratiobessel3}
\frac{J_{3}'(\omega_0)}{J_{3}(\omega_0)}<\frac{\omega_0}{4}.
\end{equation} 
Using the relations $\omega_0J_{3}'(\omega_0)=3J_{3}(\omega_0)-\omega_0J_{2}(\omega_0)$ and $J_{3}(\omega_0)=4J_{2}(\omega_0)\omega_0^{-1}$, we conclude that \eqref{e:ratiobessel3} is true if and only if $\omega_0^2=j_{0,l}'^2>6$, this last inequality is true for all $l\geq 2$.
\end{proof}
\begin{theorem}\label{t:ballmultiplePinf}
Let $B\subset \mathbb{R}^2$ be the unit disk and let $k\in \mathbb{N}$ be an index such that $\mu_{k-1}(B)=\mu_k(B)$ then $B$ is never a local minimizer for the problem:
\begin{equation*}
\inf \{P(\Omega)^2\mu_{k-1}(\Omega),\, \Omega \subset \mathbb{R}^2, \text{ bounded, open and convex } \}   
\end{equation*}
\end{theorem}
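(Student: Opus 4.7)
The plan is to imitate the proof of Theorem \ref{t:ballmultipleD} with the only change being the replacement of $D(\Omega_\epsilon)^2$ by $P(\Omega_\epsilon)^2$ in the final expansion. This turns out to be easier than the diameter case because the perimeter is a linear functional of the support function (no ``sup'' operation to track).

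First I would perturb the unit disk by its support function: $h_\epsilon(\theta) = 1 + \epsilon f(\theta)$, where $f$ is a smooth $2\pi$-periodic function to be chosen. For $\epsilon$ small enough the condition $h_\epsilon + h_\epsilon'' > 0$ is automatic, so $\Omega_\epsilon$ is convex. Writing $\mu_{k-1}(B) = \mu_k(B) = \omega_0^2 = {j'}_{m,l}^2$ and expanding $f$ in Fourier series, Theorem \ref{t:ballmultipleD} already provides the first order expansion
\begin{equation*}
\mu_{k-1}(\Omega_\epsilon) = \omega_0^2 + \epsilon \lambda_1 + o(\epsilon),\qquad \lambda_1 = -\omega_0^2\left(2\alpha_0 + \frac{m^2+\omega_0^2}{|m^2-\omega_0^2|}\sqrt{\alpha_{2m}^2+\beta_{2m}^2}\right),
\end{equation*}
where $\alpha_0$, $\alpha_{2m}$, $\beta_{2m}$ are the corresponding Fourier coefficients of $f$.

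Next I would compute the perimeter expansion. Since for convex domains $P(\Omega_\epsilon) = \int_0^{2\pi} h_\epsilon(\theta)\,d\theta$, one gets immediately $P(\Omega_\epsilon) = 2\pi + 2\pi\epsilon\alpha_0$, so $P(\Omega_\epsilon)^2 = 4\pi^2 + 8\pi^2\epsilon\alpha_0 + O(\epsilon^2)$. Combining this with the eigenvalue expansion yields
\begin{equation*}
P(\Omega_\epsilon)^2 \mu_{k-1}(\Omega_\epsilon) = 4\pi^2\omega_0^2 + 4\pi^2 \epsilon\bigl(2\alpha_0\omega_0^2 + \lambda_1\bigr) + o(\epsilon) = 4\pi^2\omega_0^2 - 4\pi^2 \epsilon\, \omega_0^2\frac{m^2+\omega_0^2}{|m^2-\omega_0^2|}\sqrt{\alpha_{2m}^2+\beta_{2m}^2} + o(\epsilon).
\end{equation*}
Observe the pleasing cancellation: the $\alpha_0$ contributions from perimeter and from $\lambda_1$ exactly cancel, leaving only the $(\alpha_{2m}, \beta_{2m})$ term with a negative sign.

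To conclude, I would simply take $f(\theta) = \cos(2m\theta)$, which has $\alpha_{2m} = 1$ and all other relevant coefficients vanishing. Then the coefficient of $\epsilon$ equals $-4\pi^2\omega_0^2 (m^2+\omega_0^2)/|m^2-\omega_0^2| < 0$, so for every sufficiently small $\epsilon > 0$ the domain $\Omega_\epsilon$ is convex and satisfies $P(\Omega_\epsilon)^2 \mu_{k-1}(\Omega_\epsilon) < P(B)^2 \mu_{k-1}(B)$, contradicting any local minimality of the disk. There is essentially no real obstacle here: all the hard work (the explicit matrix $\mathcal{M}$ and its minimal eigenvalue) is already done in Theorem \ref{t:ballmultipleD}, and the perimeter being a linear functional of $h$ makes the remaining computation immediate.
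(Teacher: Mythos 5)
Your proposal is correct and follows essentially the same route as the paper: perturb the support function, reuse the first-order eigenvalue expansion $\lambda_1$ from Theorem \ref{t:ballmultipleD}, expand $P(\Omega_\epsilon)=2\pi(1+\epsilon\alpha_0)$, and observe that the $\alpha_0$ contributions cancel, leaving a strictly negative first-order term once $\alpha_{2m}^2+\beta_{2m}^2>0$. Your explicit choice $f=\cos(2m\theta)$ and the factor $\omega_0^2$ in the final coefficient are slightly more careful than the paper's write-up, but the argument is the same.
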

\begin{proof}
We mimic the argument of the proof of Theorem \ref{t:ballmultipleD}. To simplify the notation we introduce $\mu_{k-1}(B)=\mu_k(B)=\omega_0^2$. We construct $\Omega_\epsilon$ a convex perturbation of the unit disk by perturbing the support function of the unit disk. The support function of $\Omega_\epsilon$ is given by $h_\epsilon(\theta)=1+\epsilon f(\theta)$.
We consider the Fourier expansion of the perturbation $f(\theta)=\sum_{p=0}^{\infty}\alpha_p\cos(p\theta)+\beta_p\sin(p\theta)$.
Since the perimeter of $\Omega_\epsilon$ is given by $P(\Omega_\epsilon)=\int_0^{2\pi} h_\epsilon(\theta) d\theta$,
the following asymptotic expansion for the perimeter holds $P(\Omega_\epsilon)=2\pi(1+\epsilon \alpha_0)$. The expansion of the Neumann eigenvalue is given by $\mu_{k-1}(\Omega_{\epsilon})=\mu_{k-1}(B)+\epsilon\lambda_1+o(\epsilon)$ where $\lambda_1$ is given by \eqref{e:firstorderN}, we finally obtain:
\begin{equation*}
P(\Omega_\epsilon)^2\mu_{k-1}(\Omega_\epsilon)=4\pi^2\omega_0^2-4\pi^2\epsilon \frac{m^2+\omega_0^2}{|m^2-\omega_0^2|}\sqrt{\alpha_{2m}^2+\beta_{2m}^2}, 
\end{equation*}
so we conclude that the disk cannot be a minimizer.
\end{proof}

\begin{theorem}
Let $B\subset \mathbb{R}^2$ be the unit disk and let $k\in \mathbb{N}$ be an index such that $\mu_k(B)$ is a simple eigenvalue then $B$ is never a local minimizer for the problem:
\begin{equation*}
\inf \{P(\Omega)^2\mu_k(\Omega),\, \Omega \subset \mathbb{R}^2, \text{ bounded, open and convex } \}   
\end{equation*}
\end{theorem}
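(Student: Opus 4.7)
The natural plan is to mimic the second-order analysis of Theorem \ref{t:ballsingleD}, adapting only the geometric factor. We perturb the unit disk by its support function, $h_\epsilon(\theta) = 1 + \epsilon f(\theta)$, with $f$ of exactly the same form as in that proof,
\begin{equation*}
f(\theta) = \alpha_0 + \sum_{k\geq 1} \alpha_{2k+1}\cos\bigl((2k+1)\theta\bigr) + \beta_{2k+1}\sin\bigl((2k+1)\theta\bigr).
\end{equation*}
Since $\mu_k(B)$ is simple, we have $\omega_0^2 = j_{0,l}'^2$ for some $l\geq 2$, and the perturbed eigenvalue $\mu_k(\Omega_\epsilon) = \omega_\epsilon^2$ with $\omega_\epsilon = \omega_0 + \epsilon\omega_1 + \epsilon^2\omega_2 + o(\epsilon^2)$ admits exactly the expansion already derived in the proof of Theorem \ref{t:ballsingleD}: $\omega_1 = -\alpha_0\omega_0$ as in \eqref{e:w1}, and $\omega_2$ given by \eqref{e:w2}--\eqref{e:ck}, since that computation depends only on the eigenfunction and on the boundary perturbation, not on the geometric constraint. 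No new eigenvalue work is required.

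The only genuinely new input is the perimeter. Using the identity $P(\Omega_\epsilon) = \int_0^{2\pi} h_\epsilon(\theta)\, d\theta$ already recalled in the proof of Theorem \ref{t:ballmultiplePinf}, one obtains the \emph{exact} formula $P(\Omega_\epsilon) = 2\pi(1 + \epsilon\alpha_0)$, with no higher-order correction in $\epsilon$. This is the essential simplification compared to the diameter case: $P(\Omega_\epsilon)^2 = 4\pi^2(1 + 2\epsilon\alpha_0 + \epsilon^2\alpha_0^2)$ involves only $\alpha_0$. Multiplying this by the expansion of $\mu_k(\Omega_\epsilon)$ and substituting $\omega_1 = -\alpha_0\omega_0$ makes the coefficient of $\epsilon$ cancel identically (which also reflects the scale invariance of the functional with respect to the uniform dilation encoded by $\alpha_0$), and yields
\begin{equation*}
P(\Omega_\epsilon)^2\mu_k(\Omega_\epsilon) = 4\pi^2\omega_0^2 + 8\pi^2\epsilon^2\,\omega_0\sum_{k\geq 1} c_k\bigl(\alpha_{2k+1}^2 + \beta_{2k+1}^2\bigr) + o(\epsilon^2),
\end{equation*}
with the same coefficients $c_k$ as in \eqref{e:ck}.

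The conclusion then follows from the very same explicit choice as at the end of Theorem \ref{t:ballsingleD}: taking $\alpha_3 = \beta_3 = 1$ and all other Fourier coefficients equal to zero collapses the second-order term to $16\pi^2\omega_0 c_1$. The inequality $c_1 < 0$ was shown to be equivalent to $\omega_0^2 > 6$, which holds for every $\omega_0^2 = j_{0,l}'^2$ with $l\geq 2$. Hence $P(\Omega_\epsilon)^2\mu_k(\Omega_\epsilon) < P(B)^2\mu_k(B)$ for small $\epsilon > 0$, proving that $B$ is not a local minimizer. Since all the delicate Bessel-function expansions were already carried out in the diameter case, there is no real obstacle; the only thing to double-check is the cancellation of the $\epsilon^1$ coefficient in $P^2\mu_k$, which is an immediate consequence of the linearity of the support-function formula for the perimeter.
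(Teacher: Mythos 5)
Your proposal is correct and follows essentially the same route as the paper, which likewise reuses the perturbation and the expansions $\omega_1=-\alpha_0\omega_0$, $\omega_2$ from the proof of Theorem \ref{t:ballsingleD}, replaces the diameter factor by the exact perimeter formula $P(\Omega_\epsilon)=2\pi(1+\epsilon\alpha_0)$, and arrives at $P(\Omega_\epsilon)^2\mu_k(\Omega_\epsilon)=4\pi^2\omega_0^2+8\pi^2\epsilon^2(\omega_0\omega_2-\alpha_0^2\omega_0^2)+o(\epsilon^2)$ before concluding with the same choice $\alpha_3=\beta_3=1$. Your explicit check that the $\epsilon^1$ coefficient cancels is a useful detail the paper leaves implicit, but it is not a different argument.
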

\begin{proof}
The proof is almost the same as the proof of Theorem \ref{t:ballsingleD}. Using the same perturbation we obtain:
\begin{equation*}
P(\Omega_{\epsilon})^2\mu_k(\Omega_\epsilon)=
4\pi^2\omega_0^2+\epsilon^28\pi^2(\omega_0\omega_2-\alpha_0^2\omega_0^2)+o(\epsilon^2).  
\end{equation*}
We conclude as in the proof of Theorem \ref{t:ballsingleD}.
\end{proof}
\begin{theorem}
The disk $B\subset \mathbb{R}^2$ is never a solution of the following maximization problem 
\begin{equation*}
\sup \{P(\Omega)^2\mu_k(\Omega),\, \Omega \subset \mathbb{R}^2, \text{ bounded, open and convex } \}    
\end{equation*}
\end{theorem}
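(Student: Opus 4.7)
The plan is to exhibit, for every index $k\ge 1$, an explicit convex competitor whose value of $P^2\mu_k$ strictly exceeds that of the unit disk. As in Theorem \ref{thExSupP}, the natural candidate is the rectangle $R_k=(0,k)\times(0,1)$.

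First I would compute both quantities on $R_k$. Since its Neumann spectrum is $\{\pi^2(i^2/k^2+j^2):i,j\in\mathbb N\cup\{0\}\}$, the eigenvalues strictly less than $\pi^2$ are exactly the $k$ values $0,\pi^2/k^2,\ldots,\pi^2(k-1)^2/k^2$ (obtained from $j=0$ and $i=0,\ldots,k-1$), and then $\pi^2$ appears with multiplicity at least two, coming from the modes $(i,j)=(k,0)$ and $(0,1)$. Therefore $\mu_k(R_k)=\pi^2$, and since $P(R_k)=2(k+1)$,
$$P(R_k)^2\mu_k(R_k)=4\pi^2(k+1)^2.$$
Because $P(B)^2\mu_k(B)=4\pi^2\mu_k(B)$, the theorem reduces to proving the inequality
$$\mu_k(B)<(k+1)^2\qquad\text{for every }k\ge 1.$$

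For this verification I would split into two ranges. For small values of $k$ (up to a modest explicit threshold), I would use the description $\mu_k(B)=(j'_{m,l})^2$ together with the tabulated values $(j'_{1,1})^2\approx 3.39$, $(j'_{2,1})^2\approx 9.33$, $(j'_{0,2})^2\approx 14.68$, $(j'_{3,1})^2\approx 17.65$, etc., each of which lies comfortably below the corresponding $(k+1)^2$. For $k$ above the threshold, the two-dimensional Weyl asymptotic $\mu_k(B)\sim 4\pi k/|B|=4k$, or any Kr\"oger-type linear-in-$k$ upper bound $\mu_k(B)\le Ck$ for the unit disk, settles the matter since $(k+1)^2$ grows quadratically.

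The only mildly delicate point is matching the threshold coming from the asymptotic estimate with the range verified by hand, but given the gap between the linear growth of $\mu_k(B)$ and the quadratic growth of $(k+1)^2$ this is a routine bookkeeping check; no new machinery beyond the spectral computation on $R_k$ and standard Bessel-function data is required.
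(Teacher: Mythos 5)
Your proposal is correct and follows essentially the same route as the paper: both use the rectangle $(0,k)\times(0,1)$, whose $k$-th Neumann eigenvalue is $\pi^2$, as the competitor beating the disk, reducing everything to a linear-in-$k$ upper bound for $\mu_k(B)$. The only difference is how that bound is obtained: the paper invokes the recently proved P\'olya conjecture for the disk \cite{FLP23}, giving the clean uniform estimate $\mu_k(B)\le 4\pi k/|B|=4k$ and hence $(k+1)^2>4k$ for all $k\ge 2$ with only $k=1$ treated separately, whereas you substitute the classical Kr\"oger bound (in two dimensions $\mu_k(B)\le 8\pi k/|B|=8k$, which settles $k\ge 6$) plus a finite check of the Bessel zeros for $k\le 5$. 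Your variant is marginally more elementary in its inputs at the cost of a slightly longer case analysis; do make sure to rely on the explicit Kr\"oger constant rather than the Weyl asymptotic alone, since the latter gives no effective threshold for the hand-checked range.
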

\begin{proof}
Let $B\subset \mathbb{R}^2$ be the unit disk. The proof is straightforward knowing that the Polya conjecture is true for the disk \cite{FLP23}, indeed we have that for all $k\geq1$ 
\begin{equation*}
\mu_k(B)\leq \frac{4\pi k}{|B|}
\end{equation*}
and this implies
\begin{equation*}
P(B)^2\mu_k(B)\leq 16\pi^2k.
\end{equation*}
Now, consider the rectangle $\Omega_k=[0,k]\times[0,1]$, since its $k$-th eigenvalue is $\mu_k(\Omega_k)=\pi^2$,
we get $P(\Omega_k)^2\mu_k(\Omega_k)=4\pi^2(k+1)^2>16\pi^2k$ for all $k\geq 2$, for $k=1$ we have  $P(\Omega_1)^2\mu_1(\Omega_1)=16\pi^2>4j_{0,1}'^2\pi^2=P(B)^2\mu_k(B)$. 
\end{proof}

\section{Some numerical results}\label{s:numerics}

Multiple shape optimization problems were investigated in the previous sections. Since the optimal shapes are not known, in general, we study numerically the two dimensional case. More precisely, we investigate convex domains of fixed diameter minimizing the Neumann eigenvalues and convex domains of fixed perimeter optimizing the Neumann eigenvalues. 

Numerical shape optimization among convex sets is challenging, since classical domain perturbations methods based on the shape derivative do not preserve convexity. Width or diameter constraints are non-local in nature, rendering the problem more complex. In \cite{Dirichlet-Diameter}, \cite{AntunesBogosel22} spectral decomposition of the support function are used to transform shape optimization problems among convex set into constrained optimization problem using a finite number of parameters. 
Since convex domains having segments in their boundaries correspond to singular support functions, a framework based on discrete approximations of the support function was proposed in \cite{Steklov-Diameter}. This framework was slightly modified and rendered completely rigorous in \cite{Bogosel-Convex}. The simulations presented below are based on \cite{Bogosel-Convex}. In this section, we denote by $p(\theta)$ the support function.

Consider $\theta_j = 2\pi j/N$, $j=0,...,N-1$ a uniform discretization of $[0,2\pi]$. Denoting $h=2\pi/N$, the uniform discretization step, assume approximations $p_i$ of the values of the support function $p(\theta_i)$ verify the constraints:
\begin{equation}
\label{eq:curvature-radii}
\rho_i := \frac{p_{i-1}-2p_i \cos h+p_{i+1}}{2-2\cos h} \geq 0.
\end{equation}
Denoting the radial and tangential directions at $\theta_i$ by $\bo r_i = (\cos \theta_i,\sin \theta_i)$ and $\bo t_i = (-\sin \theta_i,\cos \theta_i)$, consider the polygon given by
\begin{equation}\label{eq:discrete-poly}
\bo x_i := p_i \bo r_i + \frac{p_{i+1}-p_{i-1}}{2\sin h}\bo t_i, i=0,...,N-1.
\end{equation}
In \cite{Bogosel-Convex} it is shown that if constraints \eqref{eq:curvature-radii} are verified then the polygon given by \eqref{eq:discrete-poly} is convex. Moreover, any convex shape can be approximated arbitrarily well in this discrete framework when the number of parameters $N$ discretizing the support function verifies $N \to \infty$.

Width constraints can easily be introduced assuming $N$ is even and imposing
\begin{equation}\label{eq:width-ineq}
0\leq w_i  \leq p_i+p_{i+N/2} \leq W_i, i=0,...,(N-1)/2.
\end{equation}
The numbers $w_i$, $W_i$ represent lower and upper bounds for the width of the shape in the direction $\theta_i$. An upper bound $D$ on the diameter can be imposed by setting $W_i=D$ and $w_i=0$. Prescribing a diameter equal to $D$ is achieved setting $W_i=D$, $i=0,...,(N-1)/2$, $w_0=D$, $w_i=0$, $1\leq i \leq (N-1)/2$.

Consider a shape functional $J(K)$ with shape derivative written in the form $J'(K)(\theta) = \int_{\partial K} \bo f\  \theta \cdot \bo n$. Then, according to \cite{Bogosel-Convex}, we denote by $\phi_i:[0,2\pi]\to \mathbb{R}$ the hat functions which are $2\pi$-periodic, piece-wise affine on $[\theta_i,\theta_{i+1}]$ such that $\phi_i(\theta_j)=\delta_{ij}$. Supposing that $J(K)$ is defined through the parameters $\bo p=(p_0,...,p_N)$ we have, denoting, for simplicity, $K(\bo p)$ the resulting convex shape (given by \eqref{eq:discrete-poly}) and $J(\bo p)=J(K)$
\begin{equation}\label{eq:sensitivity}
     \frac{\partial J(\bo p)}{\partial p_i} = \int_{\partial K(\bo p)} \bo f(\bo x) \phi_i(\theta(\bo x)) d\sigma.
\end{equation}
The angle $\theta(\bo x)$ gives the orientation of the outer normal at the point $\bo x \in \partial K(\bo p)$. The numerical simulations are performed in FreeFEM \cite{FreeFEM}. For the Neumann eigenvalues it is well known that the shape derivative is given by 
\[ J'(K)(\theta) = -\int_{\partial K}(\mu_k(K) u_k^2-|\nabla u_k|^2)\theta\cdot n,  \]
thus $\bo f = (\mu_k(K) u_k^2-|\nabla u_k|^2)$.

\bo{Minimizing the Neumann eigenvalues under diameter constraint.} According to Theorem \ref{thExInfD} there exist optimal shapes solving
\[\min_{D(\Omega)=1} \mu_k(\Omega)\]
when $\Omega$ is convex and $k \geq 2$. Diameter upper bounds $W_i=1$, $i=0,...,(N-1)/2$ are imposed following \eqref{eq:width-ineq}, setting the lower bound $w_0=1$ for one of the directions. Coupled with \eqref{eq:curvature-radii} this gives a set of $N+(N-1)/2+1$ linear inequality constraints for the discrete parameters. Given a set of parameters $p_i$ verifying \eqref{eq:curvature-radii}, a mesh is constructed in FreeFEM for the polygon \eqref{eq:discrete-poly}. The Neumann eigenvalue problem for the Laplacian is solved using $P_2$ finite elements. The sensitivities of the functional with respect to the parameters $p_i$ are computed according to \eqref{eq:sensitivity}. The optimization software IPOPT \cite{IPOPT} is used to solve the constrained optimization problems in FreeFEM. The results obtained are illustrated in Figure \ref{fig:min-diam}

\begin{figure}
    \centering
    \begin{tabular}{cccc}
           \includegraphics[height=0.2\textwidth]{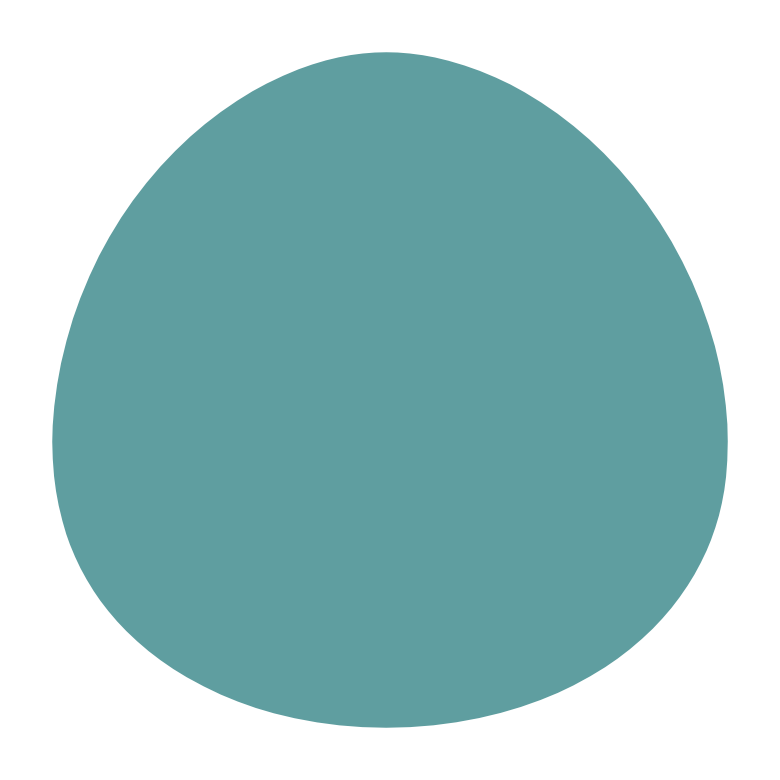}
  &        \includegraphics[height=0.2\textwidth]{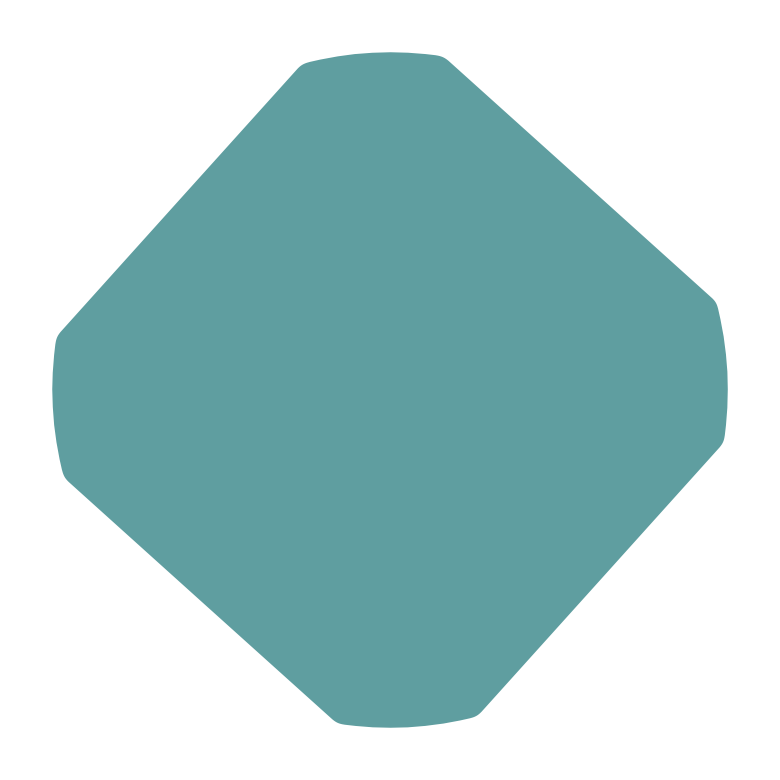}  &        \includegraphics[height=0.2\textwidth]{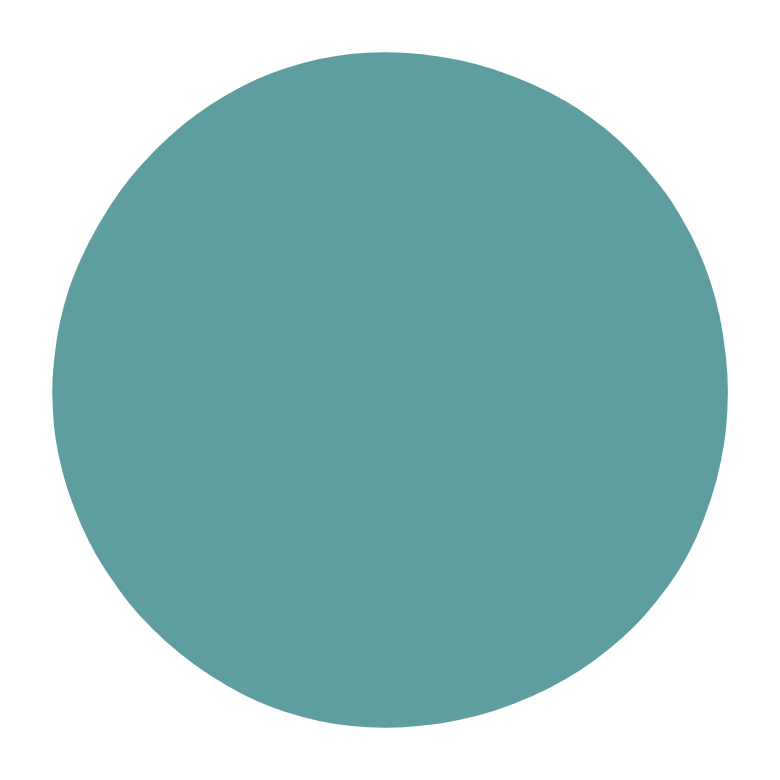} &         \includegraphics[height=0.2\textwidth]{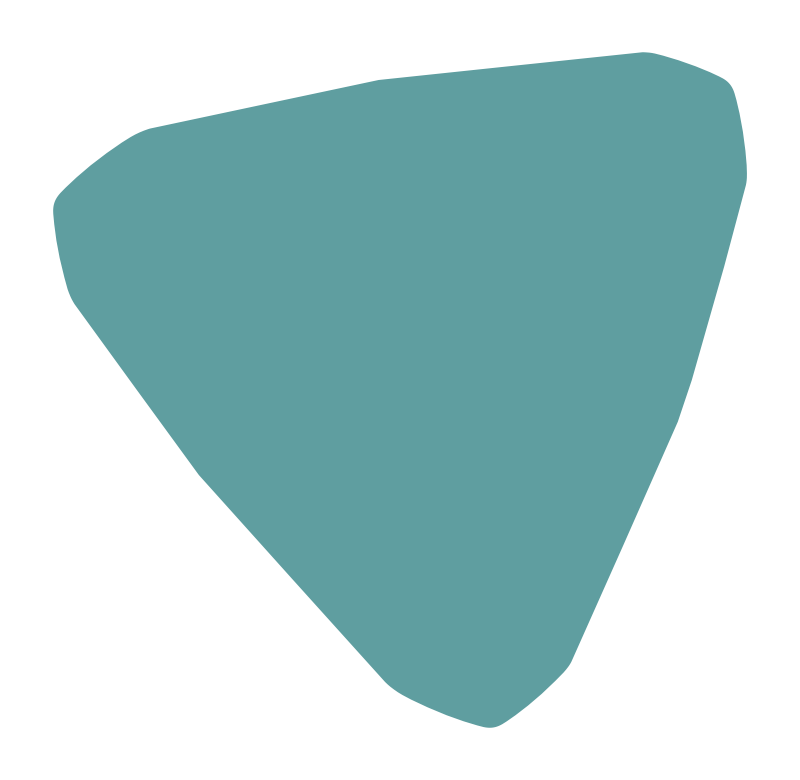} \\
 $k=2$ & $k=3$ & $k=4$ & $k=5$  \\
 $\mu_2 = 13.56$ & $\mu_3=15.42$ & $\mu_4=37.35$ & $\mu_5=48.92$ \\ 
         \includegraphics[height=0.2\textwidth]{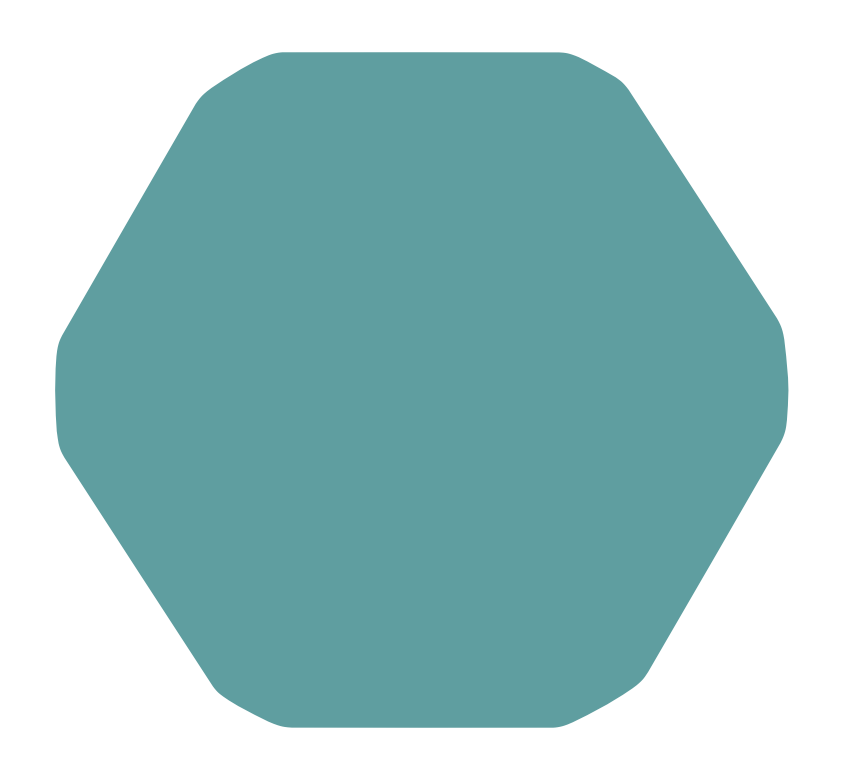}
         &        \includegraphics[height=0.2\textwidth]{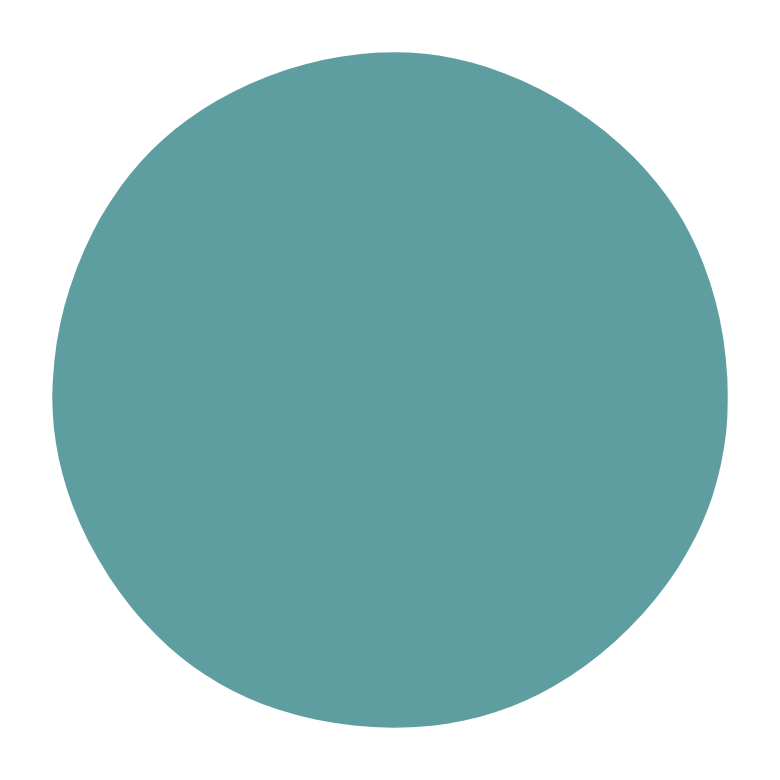}  &        \includegraphics[height=0.2\textwidth]{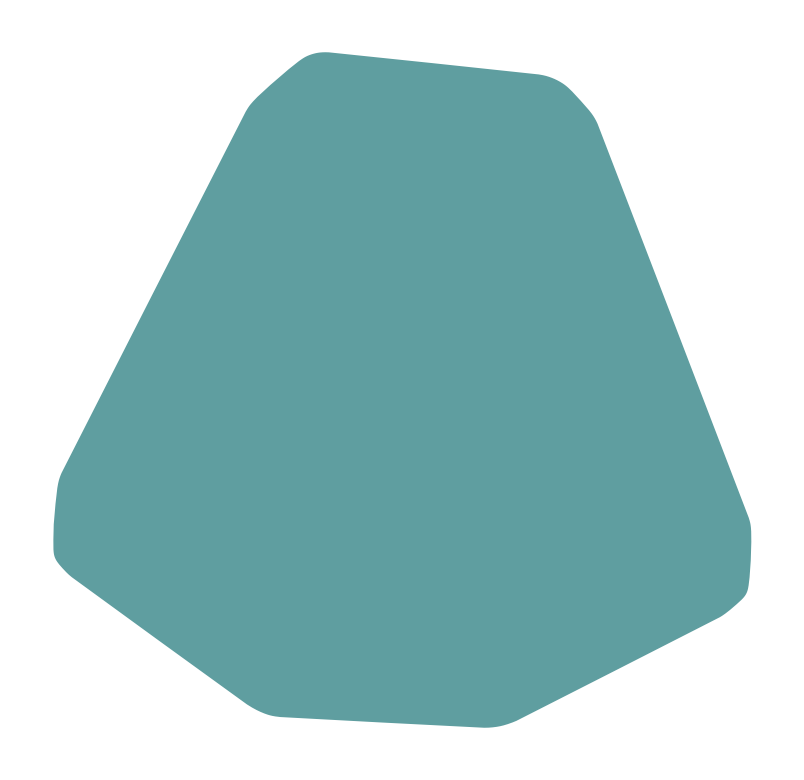} &         \includegraphics[height=0.2\textwidth]{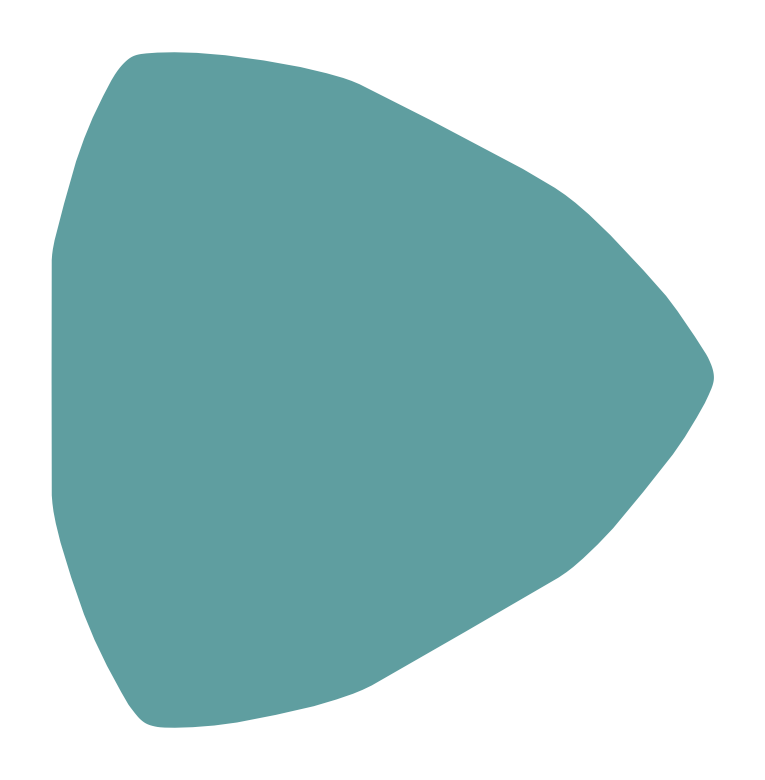}\\  
  $k=6$ & $k=7$ & $k=8$ & $k=9$  \\
   $\mu_6 = 63.49$ & $\mu_7=70.64$ & $\mu_8=97.42$ & $\mu_9=101.70$ \\ 
    \end{tabular}
  \caption{Convex shapes minimizing the $k$-th Neumann eigenvalue for shapes with unit diameter, $2 \leq k \le 9$.}
    \label{fig:min-diam}
\end{figure}

The numerical simulations give the following observations:
\begin{itemize}[noitemsep,topsep=0pt]
	\item The optimal shapes for $k \in \{4,7\}$ seem to be disks.
	\item The optimal shape for $k=2$ seems to have constant width.
	\item In general, points $x$ on the boundary of optimal shapes verify the following: the convexity constraint is saturated at $x$ or the diameter constraint is saturated at $x$. We do not have a theoretical proof of this observation.
\end{itemize}

The multiplicity of eigenvalues at the optimum is often a challenging question. We detail below the observed numerical multiplicity of the eigenvalues of the optimal shapes we obtained.

\begin{itemize}[noitemsep]
    \item[$k=2$:] $\mu_1<\mu_2=\mu_3$
    \item[$k=3$:] $\mu_2<\mu_3<\mu_4$
    \item[$k=4$:] $\mu_3=\mu_4<\mu_5$
    \item[$k=5$:] $\mu_3=\mu_4=\mu_5<\mu_6$
    \item[$k=6$:] $\mu_5<\mu_6<\mu_7$
    \item[$k=7$:] $\mu_6=\mu_7<\mu_8$
    \item[$k=8$:] $\mu_7<\mu_8<\mu_9$
    \item[$k=9$:] $\mu_8=\mu_9<\mu_{10}$
\end{itemize}
Some of the optimal shapes have multiple eigenvalues, but there are also counterexamples: $k \in \{3,6,8\}$. Therefore, no general behavior can be conjectured.

\bo{Minimization/maximization of the Neumann eigenvalues under perimeter constraint.} Theorems \ref{thExInfP}, \ref{thExSupP} imply the existence of optimal shapes minimizing and maximizing the Neumann eigenvalues among convex sets. 

Since in this case we do not have width constraints, we use the dual discretization framework presented in \cite{Bogosel-Convex} using the gauge function. The gauge function associated to a convex body $K$ containing the origin is $\gamma : [0,2\pi] \to \mathbb{R}_+$, $\gamma(\theta) = 1/\rho(\theta)$, where $\rho(\theta) = \sup\{ t : t(\cos \theta,\sin\theta) \in K\}$ is the radial function. Choose $(\theta_i)_{i=0}^{N-1}$ a uniform discretization of $[0,2\pi]$ and denote $\gamma_i = \gamma(\theta_i)$ the values of the gauge functions for direction $\theta_i$. Given $(\gamma_i)_{i=0}^{N-1}$, a sequence of strictly positive parameters, it is straightforward to construct the polygon with vertices $\bo x_i = \frac{1}{\gamma_i} \bo r_i$. This polygon is convex if and only if 
\begin{equation}\label{eq:conv-gauge} \gamma_{i-1}+\gamma_{i+1} - 2\cos h \gamma_i \geq 0, \text{ for every } i=0,...,N-1,\end{equation}
where $h = \frac{2\pi}{N}$. See \cite{Bogosel-Convex} for more details regarding this discretization method.

Given a sequence of parameters $\bo \gamma = (\gamma_i)_{i=0}^{N-1}$, the polygon with vertices $\bo x_i = \frac{1}{\gamma_i} \bo r_i$ is constructed and meshed in FreeFEM. The Neumann eigenvalues for a prescribed index $k \geq 1$ are computed using $P_2$ finite elements. The sensitivities, according to \cite{Bogosel-Convex}, are computed using
\begin{equation}\label{eq:sensitivity-gauge}
\frac{\partial J(\gamma)}{\partial \gamma_i} = -\frac{1}{\gamma_i^2}\int_{\partial K} \bo f(\bo x) \phi_i(\theta(\bo x)) d\sigma,
\end{equation}
where $\phi_i$ are piece-wise affine functions on $[\theta_i,\theta_{i+1}]$ verifying $\phi_i(\theta_j) = \delta_{ij}$ and $\bo f = (\mu_k(K) u_k^2-|\nabla u_k|^2)$.

The scale invariant functional
\[ P(\Omega)^2\mu_k(\Omega)\]
is used, with positivity constraints $\gamma_i>0$ and convexity constraints given by \eqref{eq:conv-gauge}. Minimizers are shown in Figures \ref{fig:perim-minimizers} and maximizers in Figure \ref{fig:perim-maximizers}. We have the following observations:
\begin{itemize}
	\item The maximizer of $\mu_1$ under perimeter constraint found numerically is the square. The best numerical value found was $157.79$, while the exact value for the square is $16\pi^2 \approx 157.91$. In \cite{HLL} is shown that the equilateral triangle has the same objective value. The equilateral triangle was not recovered numerically even though it has the same value for the objective function. Moreover, when imposing symmetry constraints the square and the equilateral triangle are the only maximizers. 
	\item The maximizer of $\mu_2$ under perimeter constraint seems to be a rectangle with one side equal to twice the other one. The best numerical value attained is $353.48$, while the analytical value for a $2 \times 1$ rectangle (rescaled to have unit perimeter) is $36\pi^2 \approx 355.30.$ Moreover, all maximizers under perimeter constraint seem to be polygons.
	\item The minimizers, on the other hand, are convex sets which do not have corners and which may contain segments in their boundaries.
 \item The shapes minimizing or maximizing $\mu_k$ under perimeter constraints seem to have multiple eigenvalues. This is a classical behavior in spectral optimization. More precisely, for minimizers the multiplicity cluster ends at $\mu_k$ (i.e. $\mu_k(\Omega)$ is multiple and $\mu_k(\Omega) <\mu_{k+1}(\Omega)$), while for maximizers the opposite holds: the multiplicity cluster starts with $\mu_k(\Omega)$ (i.e. $\mu_k(\Omega)$ is multiple and $\mu_{k-1}(\Omega)<\mu_k(\Omega)$). For all computations shown in Figures \ref{fig:perim-minimizers}, \ref{fig:perim-maximizers} the optimal eigenvalues are double. 
\end{itemize}

\begin{figure}
	\centering
	\begin{tabular}{cccc}
		\includegraphics[height=0.2\textwidth]{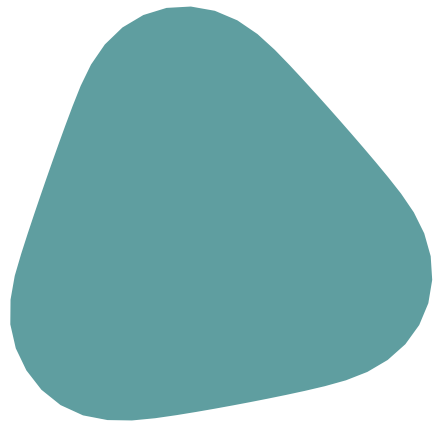}  &
		\includegraphics[height=0.2\textwidth]{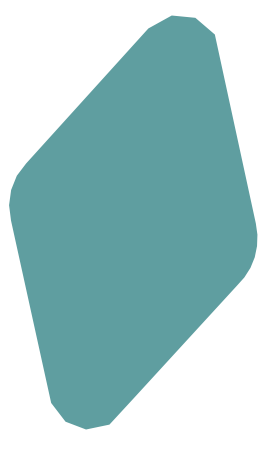}  &
		\includegraphics[height=0.2\textwidth]{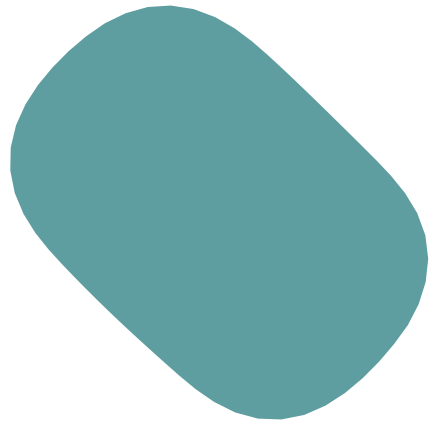}  &
            \includegraphics[height=0.2\textwidth]{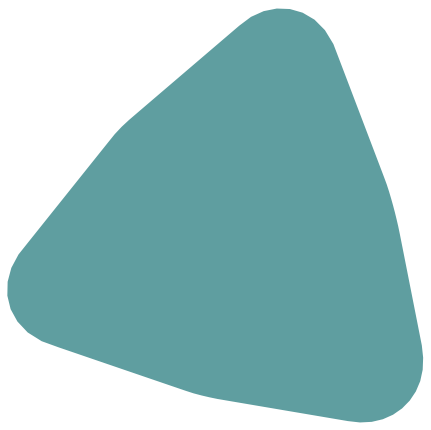} \\
		$\mu_2=132.07$ & $\mu_3=256.52$ & $\mu_4=358.57$ & 
        $\mu_5=391.53$\\
            \includegraphics[height=0.2\textwidth]{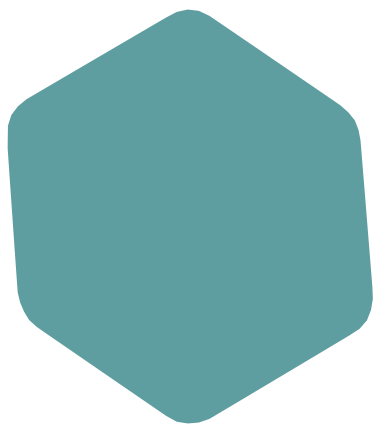}  &
		\includegraphics[height=0.2\textwidth]{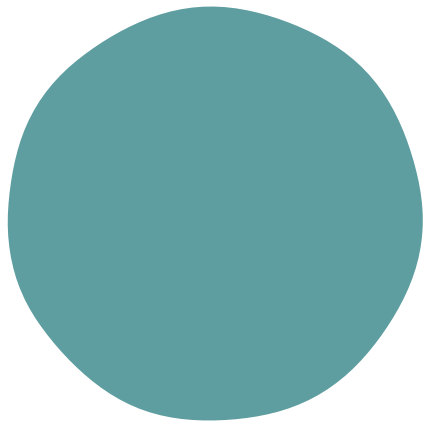}  &
		\includegraphics[height=0.2\textwidth]{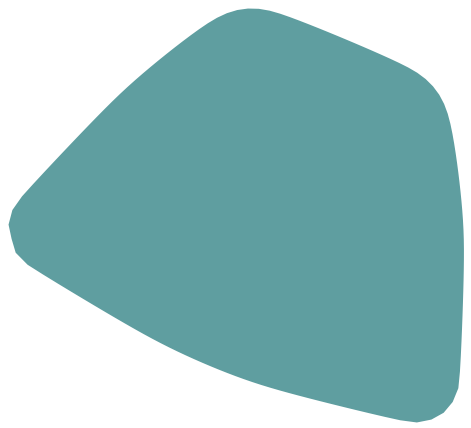}  &
            \includegraphics[height=0.2\textwidth]{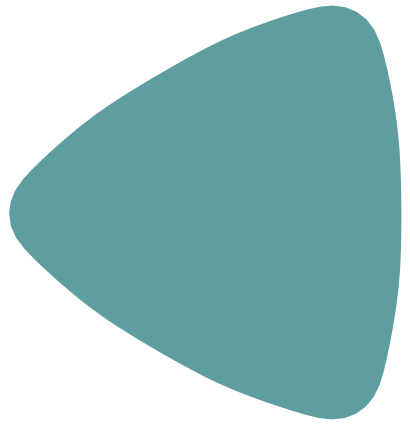} \\
		$\mu_6=616.83$ & $\mu_7=697.44$ & $\mu_8=863.53$ & 
        $\mu_9=985.59$\\
	\end{tabular}
\caption{Minimizers for the $k$-th Neumann eigenvalue among shapes with unit perimeter.}
\label{fig:perim-minimizers}
\end{figure}

\begin{figure}
	\centering
	\begin{tabular}{ccc}
		\includegraphics[height=0.25\textwidth]{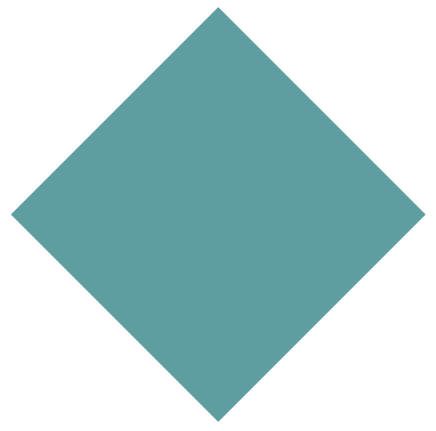}  &
		\includegraphics[height=0.25\textwidth]{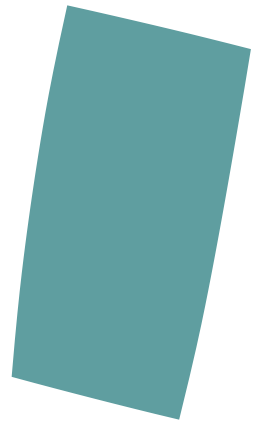}  &
		\includegraphics[height=0.25\textwidth]{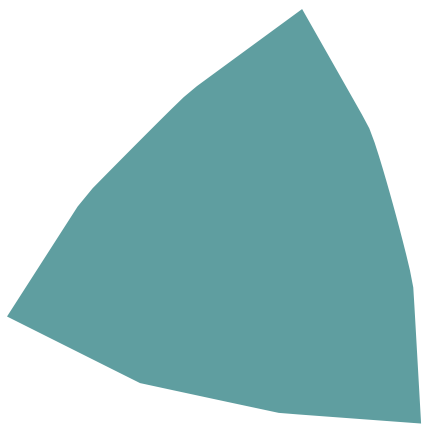}  
  \\
	    $\mu_1=157.91$ &	$\mu_2=353.48$ & $\mu_3=492.45$ 
	\end{tabular}
	\caption{Maximizers for the $k$-th Neumann eigenvalue among shapes with unit perimeter.}
	\label{fig:perim-maximizers}
\end{figure}

We refrain from making any conjectures regarding multiplicities of Neumann eigenvalues of optimal shapes under convexity and perimeter constraints due to the following considerations:
\begin{itemize}
    \item When minimizing Dirichlet-Laplace eigenvalues $(\lambda_k(\Omega))_{k \geq 1}$ under area constraint, optimal shapes found numerically always have multiple eigenvalues \cite{antunes-freitas}. The corresponding theoretical question is still open. Finding a counterexample would contradict a conjecture due to Schiffer or Bernstein: if $-\Delta u = \lambda u$ in $\Omega$, $u=0$ on $\partial \Omega$ and $\partial_n u=c$ on $\partial \Omega$ then $\Omega$ is a disk. For further details see the discussion in \cite[Section 4.3]{bogosel-oudet-perim}
    \item Already when minimizing $\lambda_2$ with convexity and area constraints in dimension two, since first eigenvalues of connected domains are simple, we have $\lambda_1(\Omega^*)<\lambda_2(\Omega^*)$. The multiplicity is lost when the convexity constraint is added. Note that the convexity constraint is saturated when minimizing $\lambda_2$ since segments are present in the boundary. See \cite{henrot-oudet-second-eig} for more details in this case.
    \item When minimizing the Dirichlet-Laplace eigenvalues with perimeter constraint \cite{bogosel-oudet-perim} there exist instances where the optimal shape has a simple eigenvalue at the optimum. 
\end{itemize}

\bibliographystyle{abbrv}
\bibliography{Refer1}
\end{document}